\documentclass[a4paper,11pt]{article}
\usepackage[latin1]{inputenc}
\usepackage{amsfonts}
\usepackage{amsthm}
\usepackage{amsmath}
\usepackage{amsfonts}
\usepackage{latexsym}
\usepackage{amssymb}
\usepackage{dsfont}
\usepackage{graphicx}
\usepackage{float}
\usepackage[usenames]{color}

\newtheorem{fed}{Definition}[section]
\newtheorem*{fed*}{Definition}
\newtheorem*{feds*}{Definitions}
\newtheorem{teo}[fed]{Theorem}
\newtheorem*{teo*}{Theorem}
\newtheorem{lem}[fed]{Lemma}

\theoremstyle{definition}
\newtheorem{rem}[fed]{Remark}

\newtheorem*{rems*}{Remarks}

\newtheorem{exa}[fed]{Example}

\newtheorem{algo}[fed]{Algorithm}
\newtheorem{nota}[fed]{Notation}


\def\coma{\, , \, }

\def\py{\peso{and}}
\newcommand{\peso}[1]{ \quad \text{ #1 } \quad }

\oddsidemargin -0in \topmargin -0.5in \textwidth 16.54truecm
\textheight 24.3truecm
\def\n0{n_{ \text{\rm \tiny o}}}

\newcommand{\IN}[1]{\mathbb {I} _{#1}}

\def\suml{\sum\limits}
\def\prodl{\prod\limits}
\def\QEDP{\tag*{\QED}}

\def\bce{\begin{center}}
\def\ece{\end{center}}

\DeclareMathOperator{\FP}{FP\,}

\def\cD{\mathcal D}

\def\dd{{\mathbf d}}

\def\py{\peso{and}}

\def\noi{\noindent}
\def\cF{\mathcal F}
\def\cG{\mathcal G}
\def\QED{\hfill $\square$}
\def\EOE{\hfill $\triangle$}
\def\EOEP{\tag*{\EOE}}
\def\uno{\mathds{1}}
\def\bm{\left[\begin{array}}
\def\em{\end{array}\right]}
\def\ben{\begin{enumerate}}
\def\een{\end{enumerate}}
\def\bit{\begin{itemize}}
\def\eit{\end{itemize}}
\def\barr{\begin{array}}
\def\earr{\end{array}}
\def\igdef{\ \stackrel{\mbox{\tiny{def}}}{=}\ }

\def\la{\lambda}
\def\al{\alpha}
\def\N{\mathbb{N}}
\def\R{\mathbb{R}}
\def\C{\mathbb{C}}
\def\I{\mathbb{I}}

\def\cC{\mathcal{C}}

\def\cH{\mathcal{H}}

\def\cS{{\cal S}}

\def\cM{{\cal M}}
\def\cB{{\cal B}}

\def\cV{{\cal V}}

\def\inc{\subseteq}

\def\ua{^\uparrow}
\def\da{^\downarrow}

 \DeclareMathOperator{\tr}{tr}

\DeclareMathOperator{\convf}{Conv (\R_{\ge0})}
\DeclareMathOperator{\convfs}{Conv_s (\R_{\ge0})}

\DeclareMathOperator*{\pot}{P_\varphi}
\DeclareMathOperator*{\potd}{P_{\varphi\coma \dd}}

\newcommand{\hil}{\mathcal{H}}

\def\varp{\varphi}

\newcommand{\mat}{\mathcal{M}_d(\mathbb{C})}

\newcommand{\matsad}{\mathcal{H}(d)}

\newcommand{\matud}{\mathcal{U}(d)}

\newcommand{\matpos}{\mat^+}

\newcommand{\matrec}[1]{\mathcal{M}_{#1} (\mathbb{C})}

\def\beq{\begin{equation}}
\def\eeq{\end{equation}}

\def\pausa{\medskip\noi}


\def\Ax2{\,( S_{E(\cF)^\#_\cV})\hat{}_x }

\begin{document}

\title{Optimal frame designs for multitasking devices \\ with weight restrictions}
\author{Mar\'\i a J. Benac, Pedro Massey, Mariano Ruiz and Demetrio Stojanoff
\footnote{Partially supported by CONICET
(PICT ANPCyT 1505/15) and  Universidad Nacional de La Plata (UNLP 11X829) 
 e-mail addresses: mjbenac@gmail.com , massey@mate.unlp.edu.ar , mruiz@mate.unlp.edu.ar , demetrio@mate.unlp.edu.ar}
\\
{\small Depto. de Matem\'atica, FCE-UNLP,  La Plata
and IAM-CONICET, Argentina }}
\date{}
\maketitle

\begin{abstract}
\noindent Let $\mathbf d=(d_j)_{j\in\mathbb I_m}\in\mathbb N^m$ be a finite 
sequence (of dimensions) and $\alpha=(\alpha_i)_{i\in\mathbb I_n}$ 
 be a sequence of positive numbers (of weights), where $\mathbb I_k=\{1,\ldots,k\}$ for $k\in\mathbb N$. We introduce
the $(\alpha\, , \,\mathbf d)$-designs i.e., $m$-tuples $\Phi=(\mathcal F_j)_{j\in\mathbb I_m}$ 
such that $\mathcal F_j=\{f_{ij}\}_{i\in\mathbb I_n}$ is a finite sequence in $\mathbb C^{d_j}$, $j\in\mathbb I_m$, 
and such that the sequence of non-negative numbers $(\|f_{ij}\|^2)_{j\in\mathbb I_m}$ forms a 
partition of $\alpha_i$, $i\in\mathbb I_n$. We characterize the existence of $(\alpha\, , \, \mathbf d)$-designs with 
prescribed properties in terms of majorization relations. 
We show, by means of a finite-step algorithm, that there exist 
$(\alpha\, , \, \mathbf d)$-designs $\Phi^{\rm op}=(\mathcal F_j^{\rm op})_{j\in\mathbb I_m}$ 
that are universally optimal; that is, for every convex function $\varphi:[0,\infty)\rightarrow [0,\infty)$ then $\Phi^{\rm op}$ minimizes the joint convex potential induced by $\varphi$ among $(\alpha\, , \, \mathbf d)$-designs, namely 
$$ 
\sum_{j\in\mathbb I_m}\text{P}_\varphi(\mathcal F_j^{\rm op})\leq \sum_{j\in \mathbb I_m}\text{P}_\varphi(\mathcal F_j) 
$$ 
for every $(\alpha\, , \,  \mathbf d)$-design $\Phi=(\mathcal F_j)_{j\in\mathbb I_m}$, where $\text{P}_\varphi(\cF)=\tr(\varphi(S_\cF))$; in particular, $\Phi^{\rm op}$ minimizes both the joint frame potential and the joint mean square error among 
$(\alpha\, , \, \mathbf d)$-designs. 
We show that in this case $\cF_j^{\rm op}$ is a frame for $\C^{d_j}$, for $j\in\I_m$. This corresponds to the existence of optimal encoding-decoding schemes for multitasking devices with energy restrictions.
\end{abstract}
\noindent  AMS subject classification: 42C15, 15A60.

\noindent Keywords: frames, frame designs,
convex potentials, majorization.

\tableofcontents

\section{Introduction}

A finite sequence $\cF=\{f_i\}_{i\in\I_n}$ of vectors in $\C^d$ is a frame for $\C^d$ if $\cF$ is a (possibly redundant) system of generators  for $\C^d$. 
In this case, it is well known that there exist finite sequences $\cG=\{g_i\}_{i\in\I_n}$ in $\C^d$ - the so called duals of $\cF$ - such that
\begin{equation}\label{eq intro dual}
f=\sum_{i\in\I_n} \langle f\coma f_i\rangle\ g_i = \sum_{i\in\I_n} \langle f\coma g_i\rangle\ f_i \peso{for} f\in\C^d\,.
\end{equation}
Thus, we can encode/decode the vector $f$ in terms of the inner products $(\langle f\coma f_i\rangle)_{i\in\I_n}\in\C^n$: 
(see \cite{TaF,FinFram,Chr} and the references therein). 
%
The frame operator 
$S_\cF\in \matpos$ is given by 
\begin{equation}\label{eq intro defi op frame}
S_\cF f=\sum_{i\in\I_n} \langle f\coma f_i\rangle \ f_i \peso{for} f\in\C^d\,.
\end{equation} 
If $S_\cF$ is invertible (i.e. if $\cF$ is a frame) the canonical dual of $\cF$ is  given by $g_i=S_\cF^{-1} f_i$ for $i\in\I_n$; this dual plays a central role 
in applications since it has several optimal (minimal) properties within the set of duals of $\cF$. Unfortunately, the computation 
of the canonical dual depends on finding $S_\cF^{-1}$, which is a challenging task from the numerical point of view. A way out of this problem
is to consider those frames $\cF$ for which $S_\cF^{-1}$ is easy to compute (e.g. 
tight frames). 
In general, the numerical stability 
of the computation of $S_\cF^{-1}$ depends on the spread of the eigenvalues of $S_\cF$. 
In \cite{BF} Benedetto and Fickus introduced a convex functional called the frame 
potential of a sequence $\cF=\{f_i\}_{i\in\I_n}$ given by 
\begin{equation}
\FP(\cF)=\sum_{i\coma j\in\I_n}|\langle f_i\coma f_j\rangle|^2\geq 0\,.
\end{equation}
In \cite{BF} the authors showed that under some normalization conditions, $\FP(\cF)$ provides an scalar measure of the spread of the eigenvalues of $\cF$. More explicitly, the authors showed that the minimizers of $\FP$ among sequences $\cF=\{f_i\}_{i\in\I_n}$ for which $\|f_i\|=1$, $i\in\I_n$, are exactly the $n/d$-tight frames. It is worth pointing out that these minimizers are 
also optimal for transmission through noisy channels (in which erasures of the frame coefficients may occur, see \cite{BodPau,HolPau}).

\pausa
In some applications of frame theory, we are drawn to consider frames $\cF=\{f_i\}_{i\in\I_n}$   
such  that $\|f_i\|^2=\alpha_i$, $i\in\I_n$, for some prescribed sequence $\alpha=(\alpha_i)_{i\in\I_n}\in (\R_{>0})^n$; this is 
known as the (classical) frame design problem.
In practice, we can think of frames with prescribed norms as designs for 
encoding-decoding schemes to be applied by a device with some sort of energy restrictions (e.g. a device with limited access to energy power):
 in this case, control of the norms of the frame elements 
amounts to control the energy needed to apply the linear scheme.

\pausa
It is then natural to wonder whether there are tight frames with norms prescribed by $\alpha$. This question has motivated
 the study of the frame design problem (see \cite{Illi,Casagregado,CFMP,CMTL,ID,DFKLOW,FWW,KLagregado} 
 and \cite{FMaP,FMP,MR0,MRiS1,MRS13,mrs2,mrs3} for the more general frame completion problem with prescribed norms). 
It is well known that in some cases there are no tight frames in the class of sequences in $\C^d$ with norms prescribed by $\alpha$; in these cases, 
it is natural to consider  minimizers of the frame potential within this class, since the eigenvalues of the frame operator 
 of such minimizers have minimal spread (thus, inducing more stable linear reconstruction processes). These considerations lead to the study of optimal designs with prescribed structure.
 In \cite{CKFT}, the authors compute the structure 
of such minimizers and show it resembles that of tight frames.  

\pausa 
It is worth pointing out that there are other measures of the spread of the spectra of frame operators (e.g. the mean squared error (MSE)). 
It turns out that both the MSE and the FP lie within the class of convex potentials
introduced in \cite{mr2010}. It is shown in \cite{mr2010} that there are  
solutions $\cF^{\rm op}$ to the frame design problem which are {\bf structural} in 
the sense that 
they are minimizers of every convex potential (e.g. MSE and FP) among frames with squared norms prescribed by $\alpha$. A fundamental tool to show the existence 
of such structural optimal frame designs is the so-called majorization in $\R^n$, which is a partial order used in matrix analysis (see \cite{Bhat}). 

\pausa
Motivated originally in the study of 
optimal finitely generated shift invariant systems with norm restrictions, 
for a finitely generated shift invariant subspace of $L^2(\R^d)$ 
(see \cite[Section 4.2.]{BMS1} and also \cite{BMS2}), 
in the present paper we consider extensions of the (classical) frame design 
problems  as follows: given a finite sequence 
(of dimensions) $\dd=(d_j)_{j\in\I_m}\in \N^m$ and a sequence 
(of weights) $\alpha=(\alpha_i)_{i\in\I_n}\in \R_{>0}^n$,
we consider the set $\cD(\alpha\coma \dd)$ of $(\alpha\coma \dd)$-designs. i.e. $m$-tuples
$\Phi=(\cF_j)_{j\in\I_m}$ such that each $\cF_j=\{f_{ij}\}_{i\in\I_n}$ is a finite sequence in $\C^{d_j}$, 
and 
\beq\label{intro eq restrict}
\sum_{j\in\I_m}\|f_{ij}\|^2=\alpha_i \peso{for} i\in\I_n\,.
\eeq
Notice that the restrictions on the norms above involve vectors in the (possibly different) spaces $f_{ij}\in\C^{d_j}$ for $j\in\I_m$. 
The $(\alpha\coma \dd)$-designs appear as the discretizations in the context of   
finitely generated shift invariant systems (see \cite{BMS1}). 
On the other hand, as in the case of frames with prescribed norms, 
$(\alpha\coma \dd)$-designs can be considered as encoding-decoding 
schemes to be applied by a multitasking device with some sort of energy restriction (e.g. due to isolation, or devices that are far from 
energy networks); in case $\cF_j$ is a frame for $\C^{d_j}$ for $j\in\I_m$, then $\Phi=(\cF_j)_{j\in\I_m}$ induces
linear schemes in the spaces $(\C^{d_j})_{j\in\I_m}$ that run in parallel. 
In this case, we want to control the overall energy needed (in each step of the encoding-decoding scheme)
to apply simultaneously the $m$ linear schemes, through the restrictions in Eq.\eqref{intro eq restrict}.

\pausa
It is natural 
to consider those $(\alpha\coma \dd)$-designs that give rise to the more stable multitasking processes. 
In order the measure the overall stability of the family $\Phi=(\cF_j)_{j\in\I_m}$ we can consider the joint 
frame potential of $\Phi$ or the joint MSE of $\Phi$ given by 
$$ \FP(\Phi)=\sum_{j\in\I_m}\FP(\cF_j) \quad , \quad \text{MSE}(\Phi)=\sum_{j\in\I_m} \text{MSE}(\cF_j)\peso{respectively .}
$$
More generally, given a convex function $\varphi:[0,\infty)\rightarrow[0,\infty)$ we introduce the joint convex potential $\pot(\Phi)$ induced by $\varphi$
(see Section \ref{sec modelando} for details); this family of convex potentials (that contains the joint frame potential and joint 
MSE) provides  natural measures of numerical stability of the family $\Phi=(\cF_j)_{j\in\I_m}$. 
We remark that  they are the same potentials  considered in the previously mentioned context of finitely generated shift invariant systems in \cite{BMS1}. 

\pausa
Given $(\alpha\coma \dd)$ as above, in this work we characterize the sequences of positive operators $S_j\in\cM_{d_j}(\C)^+$ for $j\in\I_m$, 
for which there exist $(\alpha\coma \dd)$-designs $\Phi=(\cF_j)_{j\in\I_m}$ 
such that $S_{\cF_j}=S_j$, for $j\in\I_m$.  
Our characterization is obtained in terms of the spectra of the operators $S_j$ and majorization relations, and it extends the well known solution of the classical frame design
problem. 

\pausa
Then, we 
construct $(\alpha\coma \dd)$-designs $\Phi^{\rm op}$ that are optimal in $\cD(\alpha\coma \dd)$; 
in this setting, optimality is measured in terms 
of joint convex potentials, as  discussed above. 
The kernel of this problem is the computation of the optimal spectral structure 
among sequences in $\cD(\alpha\coma \dd)$.

\pausa
We point out that our approach to 
these problems is constructive; indeed, we describe a finite step algorithm that 
produces designs $\Phi\in \cD(\alpha\coma \dd)$ with prescribed spectral structure and optimal designs $\Phi^{\rm op}\in \cD(\alpha\coma \dd)$ as above. 
Moreover, we include several numerical examples of optimal $(\alpha\coma \dd)$-designs obtained
with the implementation of our algorithm in MATLAB.
We further obtain the uniqueness of the spectral structure of  
optimal $(\alpha\coma \dd)$-designs. Moreover, we show that the optimal spectral structure does not depend on the particular choice of the convex potential.
As a consequence, our results generalize the results in \cite{BF,CKFT,mr2010}. 

\pausa
The existence of optimal $(\alpha\coma \dd)$-designs as above settles in the affirmative a conjecture
in \cite[Section 4.2.]{BMS1} regarding the existence of optimal finitely generated shift invariant systems 
(for a finitely generated shift invariant subspace of $L^2(\R^d)$) with norm restrictions, with respect to 
convex potentials (see also \cite{BMS2}). On the other hand, our results have potential applications 
in comunication theory, e.g. in the study of the capacity of Multiple Input Multiple Output (MIMO) 
Additive White Gaussian Noise (AWGN) channels (see \cite{HS07}).

%
\pausa
The paper is organized as follows. In Section \ref{sec2} we recall the notion of majorization together with some
fundamental results about this pre-order. We also include some notions and results related with finite frame theory and convex potentials.
 In Section \ref{sec3} we formalize the notion of $(\alpha\coma \dd)$-designs and describe in detail our main goals. In Section \ref{sec main results} we state and prove
our main results, that include an effective characterization of the existence of 
$(\alpha,\,\dd)$-designs with prescribed spectral structure as well as the 
existence of (universal) optimal designs. 
The paper ends with Section \ref{sec ejems}, in which we present some general comments about the problems we studied, and 
several numerical examples 
that exhibit the properties of the optimal $(\alpha\coma \dd)$-designs computed with a finite step algorithm.

\section{Preliminaries}\label{sec2}
In this section we introduce the notation, terminology and results from matrix analysis and frame theory
that we will use throughout the
paper. General references for these results are the texts \cite{Bhat} and \cite{TaF,FinFram, Chr}.
%
%
In what follows we adopt the following

\pausa
{\bf Notation and terminology}. We let $\mathcal M_{k,d}(\cS)$ be the set of $k\times d$ matrices with coefficients in $\cS\subset \C$ and write $\mathcal M_{d,d}(\C)=\mat$ for the algebra of $d\times d$ complex matrices. We denote by $\matsad\subset \mat$ the real subspace of selfadjoint matrices and by $\matpos\subset \matsad$ the cone of positive semidefinite matrices. We let $\matud\subset \mat$ denote the group of unitary matrices.
For $d\in\N$, let $\I_d=\{1,\ldots,d\}$ and let $\uno_d=(1)_{i\in\I_d}\in\R^d$ be the vector with all its entries equal to $1$.

\pausa
Given $x=(x_i)_{i\in\I_d}\in\R^d$ we denote by $x\da=(x_i\da)_{i\in\I_d}$ (respectively $x\ua=(x_i\ua)_{i\in\I_d}$) 
the vector obtained by rearranging the entries of $x$ in non-increasing (respectively non-decreasing) order. We 
denote by $(\R^d)\da=\{x\da:\ x\in\R^d\}$, $(\R_{\geq 0}^d)\da=\{x\da:\ x\in\R_{\geq 0}^d\}$ and analogously for $(\R^d)\ua$ and $(\R_{\geq 0}^d)\ua$. 

\pausa
Given a matrix $A\in\matsad$ we denote by $\la(A)=\la\da(A)=(\la_i(A))_{i\in\I_d}\in (\R^d)\da$ the eigenvalues of $A$ counting multiplicities and arranged in
non-increasing order, and by $\la\ua(A)$ the same vector  but ordered
in non-decreasing order. 
If $x,\,y\in\C^d$ we denote by $x\otimes y\in\mat$ the rank-one matrix given by $(x\otimes y) \, z= \langle z\coma y\rangle \ x$, for $z\in\C^d$.

\subsection{Majorization}\label{sec2.2}

\pausa Next we recall the notion of majorization between vectors, that will play a central role throughout our work.
\begin{fed}\rm
\ben
\item Let $x,\, y\in\R^d$. We say that $x$ is
{\it submajorized} by $y$, and write $x\prec_w y$,  if
$$
{\suml_{i\in \I_j} x^\downarrow _i\leq \suml_{i\in \I_j} y^\downarrow _i} \peso{for every} 1\leq j\leq d\,.
$$
If $x\prec_w y$ and $\tr x ={\suml_{i\in \I_d} x_i=\suml_{i\in\I_d} y_i }= \tr y$,  then $x$ is
{\it majorized} by $y$, and write $x\prec y$.
\item Let $x\in\R_{\ge 0}^n$ and $y\in\R_{\ge 0}^d$ with $n> d$. 
Then we define the notions of $\prec$ and $\prec_w$ between the vectors $x$ and $y$ 
(of different size) by 
changing $y $ by $y\oplus 0_{n-d}:=(y\coma 0 \coma \dots\coma 0) \in \R^n$. Then 
\beq\label{mayo nd}
x\prec y \peso{if} \suml_{i\in \I_n} x_i=\suml_{i\in\I_d} y_i \py 
\suml_{i\in \I_j} x^\downarrow _i\leq \suml_{i\in \I_j} y^\downarrow _i \peso{for} 1\leq j\leq d 
\ .
\eeq
and similarly one defines $y\prec x$. 
\EOE
\een
\end{fed}

\pausa It is well known that majorization is related with the class $\mathcal{DS}(d)$
 of doubly stochastic matrices i.e., formed by $D\in\mat$ with real non-negative entries such that 
each row sum and column sum equals one. 

\begin{teo}[See \cite{Bhat}]\label{teo prelis majo vs ds} \rm
Let $x,\, y\in\R^d$. Then 
\beq
x\prec y \quad \iff \peso{there exists} D\in\mathcal{DS}(d) \peso{such that} x=Dy \ .
\QEDP
\eeq
\end{teo} 
\begin{rem}\label{rem ds se construye}
Let $x,\, y\in\R^d$ be such that $x\prec y$. Using \cite[Theorem II.1.10]{Bhat} 
we get a finite step algorithm (based on the so-called T-transformations) 
that constructs $D\in \mathcal{DS}(d)$ such that $x=Dy$.

\pausa
Majorization is intimately related with tracial inequalities of convex functions.
The following result summarizes these relations (see for example \cite{Bhat}): \EOE
\end{rem}

\begin{teo}\label{teo intro prelims mayo}
\rm Let $x,\,y\in \R^d$.
If
$\varphi:I\rightarrow \R$ is a
convex function defined on an interval $I\inc \R$ such that
$x,\,y\in I^d$ then:
\ben
\item If $x\prec y$, then
$
\tr \varphi(x) \igdef\suml_{i\in\IN{d}}\varphi(x_i)\leq \suml_{i\in\IN{d}}\varphi(y_i)=\tr \varphi(y)\ .
$
\item If only $x\prec_w y$,  but $\varphi$ is an increasing convex function, then  still
$\tr \varphi(x) \le \tr \varphi(y)$.
\item If $x\prec y$ and $\varphi$ is a strictly convex function such
that $\tr \,\varphi(x) =\tr \, \varphi(y)$ then, $x\da=y\da$.\qed
\een 
\end{teo}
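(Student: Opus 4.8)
The plan is to reduce the three assertions to the Hardy--Littlewood--P\'olya/Birkhoff description of majorization and to a careful tracking of the equality case in Jensen's inequality. Recall that $x\prec y$ (for $x,y\in\R^d$) is equivalent to $x=Dy$ for some doubly stochastic $D\in\mathcal M_{d,d}(\R_{\geq 0})$, and by Birkhoff's theorem $D=\sum_{k}t_k\,P_k$ with $t_k\geq 0$, $\sum_k t_k=1$, and $P_k\in\matud$ permutation matrices; this is the only nontrivial input and can be quoted from \cite{Bhat}.

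\textbf{Parts (1) and (3).} Write $x=Dy=\sum_k t_k P_k y$ as above, and assume $t_k>0$ for all $k$ in the sum. For each coordinate $i\in\I_d$, Jensen's inequality gives
$$
\varphi(x_i)=\varphi\Big(\sum_k t_k\,(P_k y)_i\Big)\le \sum_k t_k\,\varphi\big((P_k y)_i\big)\ .
$$
Summing over $i$, and using that each $P_k$ merely permutes entries, yields $\tr\varphi(x)\le\sum_k t_k\,\tr\varphi(y)=\tr\varphi(y)$, which is (1). For (3), if moreover $\varphi$ is strictly convex and $\tr\varphi(x)=\tr\varphi(y)$, then the summed inequality is an equality, hence equality holds coordinatewise; by strict convexity this forces, for each $i$, the numbers $(P_k y)_i$ to coincide over all $k$ (recall $t_k>0$), so $(P_k y)_i=x_i$ for every $k$. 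Thus $P_ky=x$ for each $k$, i.e.\ $x$ is a rearrangement of $y$, which gives $x\da=y\da$.

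\textbf{Part (2).} When only $x\prec_w y$ holds and $\varphi$ is increasing and convex, I would first insert an intermediate vector: there exists $z\in\R^d$ with $x\leqp z$ (after reordering both non-increasingly) and $z\prec y$. One builds $z$ by keeping $z\da_i=x\da_i$ for the initial coordinates and enlarging the remaining entries just enough to reach $\tr z=\tr y$ while all partial sums of $z\da$ stay below the corresponding partial sums of $y\da$; the submajorization hypothesis $x\prec_w y$ is exactly what makes such a $z$ available (this is the standard lemma, also in \cite{Bhat}). Then $x\leqp z$ combined with monotonicity of $\varphi$ gives $\tr\varphi(x)\le\tr\varphi(z)$ entrywise, and $z\prec y$ together with part (1) gives $\tr\varphi(z)\le\tr\varphi(y)$; chaining the two yields (2).

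\textbf{Main obstacle.} The genuinely substantive ingredients are the two classical structural facts invoked above: the doubly stochastic/Birkhoff representation underlying (1) and (3), and the existence of the intermediate vector $z$ in (2). Once these are granted, the only delicate point is the bookkeeping in (3) — making sure that equality in the summed Jensen inequality propagates to each coordinate and that strict convexity is used correctly to conclude $x$ is a permutation of $y$ rather than merely that the partial sums match.
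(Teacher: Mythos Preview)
The paper does not prove this theorem; it is stated as a preliminary result, attributed to \cite{Bhat}, and marked with \qed\ immediately after the statement. So there is no ``paper's own proof'' to compare against.

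Your argument is the standard classical one (Hardy--Littlewood--P\'olya plus Birkhoff for items (1) and (3), the intermediate-vector lemma for item (2)) and is correct. One small technical point worth noting for completeness: in part (2) you need the intermediate vector $z$ to lie in $I^d$ so that part (1) applies to the pair $(z,y)$. This is automatic if one uses the dual form of the lemma (there exists $v$ with $x\prec v$ and $v\da\leqp y\da$, so each $v_i$ sits between some $x_j$ and some $y_k$, hence in $I$), or simply observes that in the paper's applications $I=\R_{\geq 0}$ and the issue does not arise. Your handling of the equality case in (3) is clean and correct.
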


\subsection{Frames and convex potentials}

\pausa
In what follows we adopt the following 

\pausa {\bf Notation and terminology}: let
$\cF=\{f_i\}_{i\in\I_n}$ be a finite sequence in $\C^d$. Then,
\ben \item $T_\cF\in \cM_{d,n}(\C)$ is the synthesis operator
 given by
$T_\cF\, x=\sum_{i\in\I_n}x_i\, f_i$, for $x=(x_i)_{i\in\I_n}\in \C^n$.
\item $T_\cF^*\in \cM_{n,d}(\C)$  is the analysis operator 
and it is given by $T_\cF^*\, f=(\langle
f,f_i\rangle)_{i\in\I_n}$, for $f\in \C^d$.
\item  $S_\cF\in \matpos$ denotes the
frame operator of $\cF$ and it is given by $S_\cF=T_\cF\,T_\cF^*$.
Hence, $${S_{\cF}}\,  f=\sum_{i\in\I_n} \langle f,f_i\rangle
f_i=\sum_{i\in\I_n} (f_i\otimes f_i)\, f \peso{for} f\in\C^d\,.$$ \item We
say that $\cF$ is a frame for $\C^d$ if it spans $\C^d$;
equivalently, $\cF$ is a frame for $\C^d$ if $S_\cF$ is a positive
invertible operator acting on $\C^d$. 
 \een

\pausa
In several applied situations it is desired to construct
a finite sequence $\cG=\{g_i\}_{i\in\I_n}\in (\C^d)^n$, in
such a way that the spectra of the frame
operator of $\cG$ is given by some $\la\in(\R_{\ge 0}^d)\da$ and the squared norms of the frame elements
are prescribed by a sequence of positive numbers  $\alpha=(\alpha_i)_{i\in\I_n}$.
This is known as the (classical)
frame design problem and it has been studied by several research groups (see for 
example \cite{Illi,Casagregado,CFMP,CMTL,ID,DFKLOW,FWW,KLagregado}). The following result
characterizes the existence of such frame designs in terms of majorization relations.

\begin{teo}[\cite{Illi,MR0}]\label{frame mayo}\rm
Let $\lambda\in \R_{\ge0}^d$ and consider
$a=(a_i)_{i\in\I_n}\in(\R_{>0}^n)\da$. Then there exists
a sequence $\cG=\{g_i\}_{i\in\I_n}$ in $\C^d$ 
such that  $\la(S_\cG)= \la\da$  and  $\|g_i\|^2=a_i$ for $i\in\I_n$
if and only if $a\prec\la$. \QED
\end{teo}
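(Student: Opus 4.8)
The plan is to prove the two implications separately, in both cases reducing to a single-block version of the result and then iterating, using the characterization of majorization between ordered vectors recalled in Remark \ref{rem del rem}.

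For the necessity ($\Rightarrow$), suppose $\cG=\{g_i\}_{i\in\I_n}$ in $\C^d$ satisfies $\|g_i\|^2=\alpha_i$ and $\la(S_\cG)=\la$. First I would note that $\tr(S_\cG)=\sum_{i\in\I_n}\|g_i\|^2=\sum_{i\in\I_n}\alpha_i$, while $\tr(S_\cG)=\sum_{k\in\I_d}\la_k$, so $\tr\alpha=\tr\la$. For the submajorization inequalities, fix $1\le j\le \min\{n,d\}$; since $\alpha\in(\R_{>0}^n)\da$, the sum $\sum_{i\in\I_j}\alpha_i$ equals the maximum over $j$-element subsets $J\subset\I_n$ of $\sum_{i\in J}\|g_i\|^2=\tr\big(T_\cG^{(J)}(T_\cG^{(J)})^*\big)$, where $T_\cG^{(J)}$ is the synthesis operator of the subfamily $\{g_i\}_{i\in J}$. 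This subfamily has at most $j$ vectors, so the rank of $(T_\cG^{(J)})^*T_\cG^{(J)}\in\mat$ is at most $j$; compressing instead onto the span of the top $j$ eigenvectors of $S_\cG$ and using that the trace of a positive operator dominates the trace of any compression, one gets $\sum_{i\in J}\|g_i\|^2=\tr\big(P\, S_\cG^{(J)}\big)\le \sum_{i\in\I_j}\la_i(S_\cG)$. Summing over the best $J$ gives $\sum_{i\in\I_j}\alpha_i\le\sum_{i\in\I_j}\la_i$, i.e. $\alpha\prec_w\la$, hence $\alpha\prec\la$.

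For the sufficiency ($\Leftarrow$), assume $\alpha\prec\la$ with $\alpha\in(\R_{>0}^n)\da$, $\la\in(\R_{\ge0}^d)\da$. Because $\alpha$ has strictly positive entries, the trace equality forces $n\ge d$ (otherwise $\tr\alpha$ would be a sum of fewer than $d$ of the $\la_i$, which cannot equal $\sum_{k\in\I_d}\la_k$ unless some $\la_i=0$ with $n<d$ — in any case one arranges $n\ge\,$the number of nonzero $\la_i$). I would construct $\cG$ by an explicit recursive ``one-vector-at-a-time'' (Bendel--Mickey / Schur--Horn style) procedure: pick a unit vector $v_1$ lying appropriately with respect to the eigenbasis of $\mathrm{diag}(\la)$, set $g_1=\alpha_1^{1/2}v_1$, and replace $\mathrm{diag}(\la)$ by $\mathrm{diag}(\la)-g_1\otimes g_1$, whose spectrum $\la'$ is chosen so that the residual majorization $(\alpha_2,\dots,\alpha_n)\prec\la'$ is preserved. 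The key elementary fact is the \emph{one-step lemma}: if $\beta\prec\mu$ in $\R^d$ and $0\le t\le$ (appropriate bound), then there is a positive semidefinite $B$ with $\la(B)=\mu$ and a unit vector $v$ with $\langle Bv,v\rangle=t$ such that $\la(B-t\,v\otimes v)\succ(\beta_2,\dots,\beta_d,\text{padding})$; this is where the interlacing of eigenvalues after a rank-one perturbation and the majorization bookkeeping of Remark \ref{rem del rem} do the real work. Iterating $n$ times produces $\cG$ with $\|g_i\|^2=\alpha_i$ and $S_\cG=T_\cG T_\cG^*$ having the prescribed spectrum $\la$.

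The main obstacle is the sufficiency direction, and within it the one-step lemma: one must choose at each stage both the admissible value $\alpha_k$ to ``peel off'' and the direction $v_k$ so that the remaining data still satisfies the majorization hypothesis and the dimension count stays consistent; controlling this requires the precise form of majorization between ordered vectors (Remark \ref{rem del rem}) rather than the raw definition, together with Cauchy interlacing for rank-one updates. Since this is a known result (\cite{Illi,MR0}), I would present the recursion cleanly and relegate the interlacing computation to a short lemma, or alternatively invoke the Schur--Horn theorem to realize $\mathrm{diag}$ with the right diagonal and then rotate, obtaining $g_i$ as columns of $U\,\mathrm{diag}(\la)^{1/2}$ for a suitable unitary $U$.
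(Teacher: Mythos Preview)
The paper does not supply its own proof of this theorem; it is quoted with a citation to \cite{Illi,MR0} and used as a tool thereafter. So there is no ``paper's proof'' to compare against; I comment only on the soundness of your plan.

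Your overall strategy (Schur--Horn/Bendel--Mickey) is the standard one from the cited references, but two steps are not right as written. In the necessity direction, the displayed equality $\sum_{i\in J}\|g_i\|^2=\tr\big(P\, S_\cG^{(J)}\big)$ is false in general: if $P$ is the projection onto the top-$j$ eigenspace of $S_\cG$, then $\tr(P\,S_\cG^{(J)})=\sum_{i\in J}\|Pg_i\|^2\le \sum_{i\in J}\|g_i\|^2$, which goes the wrong way. The argument you actually need is the one you half-state just before: $S_\cG^{(J)}=\sum_{i\in J} g_i\otimes g_i\le S_\cG$ and $\mathrm{rk}\,S_\cG^{(J)}\le j$, so by Weyl monotonicity $\tr(S_\cG^{(J)})=\sum_{k\le j}\la_k(S_\cG^{(J)})\le \sum_{k\le j}\la_k(S_\cG)$. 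Cleaner still is to observe that $\alpha$ is the diagonal of the Gram matrix $T_\cG^*T_\cG$, whose nonzero eigenvalues coincide with those of $S_\cG$; Schur's majorization of the diagonal by the spectrum gives $\alpha\prec(\la,0,\dots,0)$, which is exactly $\alpha\prec\la$ under the paper's convention for vectors of different lengths.

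In the sufficiency direction, the sentence ``Because $\alpha$ has strictly positive entries, the trace equality forces $n\ge d$'' is false (take $\alpha=(1)$, $\la=(1,0)$, $n=1$, $d=2$); your parenthetical fix (that $n$ must be at least the number of nonzero entries of $\la$) is the correct replacement, and in fact is what the one-step recursion needs. Beyond that, your one-step lemma is stated too loosely to be checkable; you will need the precise interlacing statement for rank-one perturbations (or, equivalently, invoke Horn's converse to Schur's theorem to produce a Hermitian matrix with spectrum $(\la,0,\dots,0)$ and diagonal $\alpha$, and read off the $g_i$ from a factorization), rather than the informal ``chosen so that the residual majorization is preserved''.
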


\pausa
The previous result shows the flexibility of structured frame designs, which is important in applied situations.
Also, numerical stability of the encoding-decoding scheme induced by a frame plays a role in applications; hence,
a central problem in this area is to described the structured frame designs that maximize the stability of their
encoding-decoding scheme. One of the most important (scalar) measures of stability  is the so-called frame potential
introduced by Benedetto and Fickus in \cite{BF} given by
$$ \FP(\cF)=\sum_{i\coma j\in\I_n}|\langle f_i\coma f_j\rangle|^2=\tr(S_\cF^2) \peso{for} \cF=\{f_i\}_{i\in\I_n}\in(\C^d)^n\,.$$
Benedetto and Fickus have shown that (under certain normalization conditions) minimizers of the frame potential induce the
most stable encoding-decoding schemes.
More generally, we can measure the stability of the scheme induced
by the sequence $\cF=\{f_i\}_{i\in\I_n}\in(\C^d)^n$ in terms of
convex potentials. In order to introduce these potentials we
consider the sets
$$
\convf = \{
\varphi:\R_{\geq 0} \rightarrow \R_{\geq 0}\ :\  \varphi   \ \mbox{ is a convex function} \ \}
$$
and $\convfs = \{\varphi\in \convf : \varphi$ is strictly convex $\}$.
\begin{fed}\label{pot generales}\rm
Following \cite{mr2010} we consider the convex potential
$\pot$ associated to $\varphi\in \convf$, given by
$$
\barr{rl}
\pot(\cF)&=\tr \, \varp(S_\cF) = \sum_{i\in\I_d}\varphi(\lambda_i(S_\cF)\,) \peso {for}
\cF=\{f_i\}_{i\in\I_n}\in  (\C^d)^n \ , \earr
$$
where the matrix $\varp(S_\cF)$ is defined by means of the usual functional calculus. \EOE
\end{fed}
\pausa
Convex potentials allow us to model several well known measures of stability considered in frame theory. For example,
in case $\varp(x)=x^2$ for $x\in\R_{\geq 0}$ then $\pot$ is the Benedetto-Fickus frame potential; in case
$\varp(x)=x^{-1}$ for $x\in \R_{>0}$ then $\pot$ is known as the mean squared error (MSE).

\pausa Going back to the problem of stable designs, it is worth
pointing out the existence of structured designs that are
optimal with respect to every convex potential. Indeed, given
$\alpha=(\alpha_i)_{i\in\I_n}\in \R_{\geq 0}^n$ and $d\in\N$ with $d\leq n$, 
the $\alpha$-torus is the set:
\beq\label{eq defi Balfad}
 \mathcal B_{\alpha\coma d}=\{ \cF=\{f_i\}_{i\in\I_n}\in (\C^d)^n:\ \|f_i\|^2=\alpha_i \, , \ i\in\I_n\}\, .
\eeq We endow $\mathcal B_{\alpha\coma d}$ (which is a product
space) with the product metric. The structure of (local)
minimizers of convex potentials in $\mathcal B_{\alpha\coma d}$
has been extensively studied. The first results were obtained for
the frame potential in \cite{BF} and in a more general context in
\cite{CKFT}. The case of general convex potentials was studied in
\cite{FMaP,FMP,MRiS1,MR0,mr2010,MRS13,mrs2,mrs3} (in some cases in
the more general setting of frame completion problems with
prescribed norms).

\section{On $(\alpha\coma \dd)$-design problems}\label{sec3}

\pausa
We begin this section by introducing notation 
and terminology that allow us to model the $(\alpha\coma \dd)$-design problems, including the optimal design problem with prescribed weights.
 Then, we state and prove our main results for $(\alpha\coma \dd)$-designs.

\pausa
\subsection{Modeling the problem}\label{sec modelando}
Now we generalize the $\alpha$-torus to the multi-frames:
\begin{fed}\label{nueva def}\rm
Let $\alpha=(\alpha_i)_{i\in\I_n}\in \R_{>0}^n$ and $\dd =(d_j)_{j\in\I_m}\in(\N^m)\da$ be such that $d_1\leq n$. 
\ben
\item 
An $(\alpha\coma \dd)$-design is an $m$-tuple
$$
\Phi=(\cF_j)_{j\in\I_m} \ \ , \peso{where}
\cF_j=\{f_{ij}\}_{i\in\I_n} \in (\C^{d_j})^n  \peso{for} j\in\I_m
$$ 
and such that 
$\suml_{j\in\I_m}\| f_{ij}\|^2=\alpha_i$, for $ i\in\I_n\,.
$ 
\item 
We denote by $\mathcal D(\alpha\coma \dd)$ the set of all $(\alpha\coma \dd)$-designs. 
We point out that (in order to simplify our description of the model) we consider 
$(\alpha\coma \dd)$-designs in a broad sense; namely, if
$\Phi=(\cF_j)_{j\in\I_m}\in \mathcal D(\alpha\coma \dd)$ then $\cF_j$ is not necessarily a frame for $\C^{d_j}$, for $j\in\I_m\,$.
\item 
In order to compare the overall stability of the linear encoding-decoding schemes induced by 
an $(\alpha\coma \dd)$-design we introduce the following potentials: 
Given
$\varphi\in\convf$ we consider the joint potential induced by $\varphi$ on $\Phi=(\cF_j)_{j\in\I_m}\in\cD(\alpha\coma \dd)$ given by
\beq
\pot(\Phi)=\sum_{j\in\I_m} \pot(\cF_j)\ =\sum_{j\in\I_m} \tr\ \varphi(S_{\cF_j})
=\sum_{j\in\I_m}\sum_{i\in \I_{d_j}}\varphi(\la_i(S_{\cF_j})\, )\ . \EOEP
\eeq
\een
\end{fed}

\pausa
Consider
the notation and terminology of Definition \ref{nueva def}. 
We can now describe the main problems that we consider in this work as follows: 
\begin{itemize}
\item[P1.] Determine necessary and sufficient conditions for the existence of $(\alpha\coma \dd)$-designs with prescribed spectral structure and describe
algorithmic procedures to construct such designs, in case they exist.
\item[P2.] Given $\varphi\in\convf$ determine the existence and structure of those $\Phi_\varphi\in \mathcal D(\alpha\coma \dd)$
that  minimize the joint convex potential  $\pot$ in $\cD(\alpha\coma \dd)$, that is
\beq\label{eq desi estruc opt}
 \pot(\Phi_\varphi)=\min\{\pot(\Phi):\ \Phi\in \mathcal D(\alpha\coma \dd)\}\, .
\eeq In this case we say that $\Phi_\varphi$ is an $\pot$-optimal $(\alpha\coma \dd)$-design. 
Determine whether these $\pot$-optimal $(\alpha\coma\dd)$-designs depend on the particular choice of $\pot$, for 
strictly convex functions $\varphi\in \convfs$.
\item[P3.] Describe an algorithmic procedure that computes $\pot$-optimal $(\alpha\coma \dd)$-designs. 
\item[P4.] Characterize the $\pot$-optimal $(\alpha\coma \dd)$-designs in terms of some structural properties.
\end{itemize}
We will solve problems P1.-P4. In particular, we will show
that if $\Phi_\varphi=(\cF_j)_{j\in\I_m}$ is an $\pot$-optimal $(\alpha\coma \dd)$-design for $\varphi\in\convfs$ then, $\cF_j$ is a frame for $\C^{d_j}$ 
for each $j\in\I_m$ (see Section \ref{sec main results}). Moreover, we will show that 
$\pot$-optimal $(\alpha\coma\dd)$-designs do not depend on the particular choice of $\pot$, for 
strictly convex functions $\varphi\in \convfs$.
\subsection{Main results}\label{sec main results}

In this section we 
state and prove our main results; these include the existence of 
$(\alpha\coma \dd)$-designs with prescribed spectral structure, and designs with some special structure 
which turn out to be optimal designs in the sense
of Problem (P2). We further show the uniqueness of the spectral structure of 
optimal $(\alpha\coma \dd)$-designs.

\pausa
Our first main result characterizes the existence of $(\alpha\coma \dd)$-designs with prescribed spectral structure. 
We formalize problem P1. in terms of the following 

\begin{fed}\label{defi admiss}
\rm 
Let $\dd=(d_j)_{j\in\I_m}\in (\N^m)\da$ and $\alpha=(\alpha_i)_{i\in\I_n}\in (\R_{\geq 0}^n)\da$ be such that $d_1\leq n$.
Let 
$$
\mu_j=(\mu_{i,j})_{i\in\I_{d_j}}\in (\R_{\geq 0}^{d_j})\da \peso{for} j\in\I_m  
\peso{and set} \cM:=\{\mu_j\}_{j\in\I_m}\in \prod_{j \in \I_m} \, (\R_{\geq 0}^{d_j})\da \ .
$$ 
We say that the pair $(\alpha\coma \cM)$ is {\bf admissible} if there exists 
$\Phi=(\cF_j)_{j\in\I_m}\in\cD(\alpha\coma \dd)$ such that 
$$
\la(S_{\cF_j})=\mu_j  \peso{for every} j\in\I_m  \ .
$$
In this case we denote $\cM = \cM_\Phi\,$. \EOE
\end{fed}

\pausa
In order to obtain  an effective characterization of
admissibility, we introduce the notion of $(\alpha\coma m)$-weight partition matrix.

\begin{rem}\label{defi conjuntos para problemas}\rm
Let $\dd=(d_j)_{j\in\I_m}\in (\N^m)\da$ and $\alpha=(\alpha_i)_{i\in\I_n}\in (\R_{\geq 0}^n)\da$ be such that $d_1\leq n$.
\begin{enumerate}
\item We consider the set of $(\alpha\coma m)$-weight partitions given by
$$
P_{\alpha\coma m}=\{A=(a_{ij})_{i\in\I_n,\, j\in\I_m}\in \mathcal M_{n,m}(\C)\,:\, a_{ij}\geq 0 \quad \text{and}\; 
\sum_{j\in\I_m}a_{ij}=\alpha_i \peso{for} i\in\I_n\,\}\, .$$
\item A sequence $(\cF_j)_{j\in\I_m}\in\cD(\alpha\coma \dd) \iff $  its matrix of weights 
$$
A = \Big\{\|f_{ij}\|^2\Big\}_{i\in\I_n \ j\in \I_m} \ \in P_{\alpha\coma m} \ .
$$
\end{enumerate}
\end{rem}

\begin{lem}[A first characterization of admissible pairs]\label{rem: primera equiv admis y part mayo}
Consider a pair $(\alpha\coma \cM)$ as in Definition \ref{defi admiss} above. Then the following conditions are equivalent. 
\ben
\item The pair $(\alpha\coma \cM)$ is admissible.
\item There exists  a matrix $A\in P_{\alpha \coma m}$ such that  
\beq \label{col}
c_j(A)\prec \mu_j \peso{for every} j\in\I_m \ ,
\eeq
where $c_j(A)\in \R_{\geq 0}^n$ denotes the $j$-th column of $A$.
\een
In particular, the set of sequences $\cM$ such that $(\alpha\coma \cM)$ is admissible is {\bf convex} 
in  $\prodl_{j \in \I_m} \, \R^{d_j}$. 
\end{lem}
\proof
If the pair $(\alpha\coma \cM)$ is admissible, 
let $(\cF_j)_{j\in\I_m}\in\cD(\alpha\coma \dd)$ be such that 
$\la(S_{\cF_j})=\mu_j$, for $j\in\I_m$. Let $A\in P_{\alpha\coma m}$ be given by $c_j(A)=(\|f_{ij}\|^2)_{i\in\I_n}$, where
$\cF_j=\{f_{ij}\}_{i\in\I_n}$, for $j\in\I_m$. Then, by the Theorem \ref{frame mayo}, $c_j(A)\prec \mu_j$ for $j\in\I_m$. 

\pausa
Conversely, assume that there exists 
$A\in P_{\alpha,m}$ with $c_j(A)\prec \mu_j$, for $j\in\I_m$. Then, again by Theorem \ref{frame mayo},
 for each $j\in\I_m$ there exists $\cF_j=\{f_{ij}\}_{i\in\I_n}\in (\C^{d_j})^n$ such that 
$c_j(A)=(\|f_{ij}\|^2)_{i\in\I_n}$ and $\la(S_{\cF_j})=\mu_j$. Hence, $(\cF_j)_{j\in\I_m}\in\cD(\alpha\coma \dd)$ which shows that the pair $(\alpha\coma \cM)$ is admissible.\QED

\pausa
The following result provides an
effective method to determine whether a given pair is admissible or not.

\begin{teo}\label{teo: caract de la part mayo}
Let $\dd=(d_j)_{j\in\I_m}\in (\N^m)\da$ and $\alpha=(\alpha_i)_{i\in\I_n}\in (\R_{\geq 0}^n)\da$ be such that $d_1\leq n$.
Given a sequence  $\cM=\{\mu_j\}_{j\in\I_m}\in \prod_{j \in \I_m} \, (\R_{\geq 0}^{d_j})\da$, 
set
\beq\label{sigmaM}
\sigma_\cM:=\sum_{j\in\I_m} (\mu_j\oplus 0_{d_1-d_j})\in(\R_{\geq 0}^{d_1})\da\ .
\eeq
Then, the pair $(\alpha\coma \cM)$ is admissible if and only if
$\alpha\prec \sigma_\cM$.
\end{teo}
\begin{proof}
Assume first that the pair $(\alpha\coma \cM)$ is admissible. Then, by Lemma \ref{rem: primera equiv admis y part mayo}, 
there exists $A\in P_{\alpha\coma m}$ such that $c_j(A)=(a_{ij})_{i\in\I_n}\prec \mu_j
=(\mu_{i,j})_{i\in\I_{d_j}}\in (\R^{d_j}_{\geq 0})\da$, for $j\in\I_m\,$. 
Fix an index $j\in\I_m\,$. Note that $n\ge d_1 = \min \{n\coma d_1\}$. Hence, by hypothesis, for $k\in\I_{d_1}$ we have that 
\beq\label{eq. cond. nec}
\barr{rl} &
 \suml_{i=1}^k a_{ij}\leq \suml_{i=1}^k (c_j(A)\da)_i\leq \suml_{i=1}^{\min\{k\coma d_j\}} \mu_{i,j} \implies
 \suml_{i=1}^k \alpha_i= \suml_{j\in\I_m} \suml_{i=1}^k a_{ij}\leq \suml_{j\in\I_m} \suml_{i=1}^{\min\{k\coma d_j\}} \mu_{i,j}\ .
\earr
\eeq
Notice that 
$$ \sigma_\cM=(\sigma_i)_{i\in\I_{d_1}}=\sum_{j\in\I_m} (\mu_j\oplus 0_{d_1-d_j})
\implies \sum_{j\in\I_m} \sum_{i=1}^{\min\{k\coma d_j\}} \mu_{i,j}=\sum_{i=1}^k\sigma_i\ \ , \ \ k\in\I_{d_1}\,. $$
Then, Eq. \eqref{eq. cond. nec} shows that $\alpha\prec \sigma_\cM$ (the equality $\tr\, \sigma_\cM 
= \tr\, \alpha$ is clear). 

\pausa
For the converse, in order to show that $(\alpha\coma \cM)$ is admissible, we prove that there exists $A\in P_{\alpha,m}$
such that $c_j(A)\prec \mu_j$, for $j\in\I_m$ (see Lemma \ref{rem: primera equiv admis y part mayo}). Indeed, 
since $\alpha\prec \sigma_\cM$, by Theorem \ref{teo prelis majo vs ds} there exists a doubly stochastic matrix 
$D\in \mathcal{DS}(n)$, such that $D(\sigma_\cM\oplus 0_{n-d_1})=\alpha$. Consider $A\in\cM_{n\coma m}(\R_{\geq 0})$ determined by
$c_j(A)= D(\mu_j\oplus 0_{n-d_j})$, for $j\in\I_m$. Notice that in this case by construction, 
$c_j(A)\prec \mu_j\oplus 0_{n-d_j}\implies c_j(A)\prec \mu_j$ for $j\in \I_m$ and
$$ 
A\,\uno_m=\sum_{j\in\I_m} c_j(A)=\sum_{j\in\I_m}D(\mu_j\oplus 0_{n-d_j})=D(\sigma_\cM\oplus 0_{n-d_1})=\alpha\ .
$$
Thus, $A\in P_{\alpha, m}$ and the pair $(\alpha\coma \cM)$ is admissible.
\end{proof}

\begin{rem}[Finite-step algorithm for constructing $(\alpha\coma \dd)$-designs with prescribed spectral structure]
\label{rem: algotutti} With the notation of 
Theorem \ref{teo: caract de la part mayo}, assume that $\alpha\prec \sigma_\cM\,$.
Hence, in this case the pair $(\alpha\coma \cM)$ is admissible. 
By Remark \ref{rem ds se construye}, there is a finite step algorithm that constructs $D\in\mathcal{DS}(n)$ such that 
$\alpha=D\, (\sigma_\cM\oplus 0_{n-d_1})$.
From the previous proof we see that if we consider $A\in\cM_{n\coma m}(\R_{\geq 0})$ determined by
$c_j(A)= D(\mu_j\oplus 0_{n-d_j})$, for $j\in\I_m$, then $A\in P_{\alpha, m}$. Moreover, by construction we have that 
$c_j(A)\prec \mu_j\oplus 0_{n-d_j}$ ($\implies  c_j(A)\prec \mu_j$) for $j\in \I_m\,$. 

\pausa
We can now apply finite step algorithms (such as the one-sided Bendel-Mickey algorithm, see \cite{CFMP,CMTL,ID,FWW}) 
and obtain $\cF_j=\{f_{ij}\}_{i\in\I_n}\in (\C^{d_j})^n$ such that $(\|f_{ij}\|^2)_{i\in\I_n}=c_j(A)$
and such that $\la(S_{\cF_j})=\mu_j$ for $j\in\I_m$. Therefore, we get 
$\Phi=(\cF_j)_{j\in\I_m}\in\cD(\alpha\coma \dd)$ such that $\cM = \cM_\Phi$ in a constructive way.
\EOE
\end{rem}

\pausa
The following definition introduces an $m$-tuple of vectors (of eigenvalues), associated to every $(\alpha\coma \dd)$-design, and a large vector constructed from the juxtaposition of the elements of this set. These are going to be useful in proving  the existence of optimal designs in terms of majorization relations, related with problem P2 above.  (see Theorem \ref{teo el algo sirve} below).

%

\begin{fed}\label{rem equiv mayo} \rm 
Let $\dd=(d_j)_{j\in\I_m}\in (\N^m)\da$ and $\alpha=(\alpha_i)_{i\in\I_n}\in (\R_{\geq 0}^n)\da$ be such that $d_1\leq n$.
 Let $\Phi=(\cF_j)_{j\in\I_m}\in \mathcal D(\alpha\coma \dd)$ and
let $S_j=S_{\cF_j}\in\cM_{d_j}(\C)^+$ denote the frame operators of $\cF_j$, for $j\in\I_m$. We define 
\beq\label{el Lambda}
\cM _\Phi = \{\lambda(S_j)\}_{j\in\I_m} \in \prod_{j \in \I_m} \, (\R_{\geq 0}^{d_j})\da
\py 
\Lambda_\Phi=\big(\lambda(S_1)\coma\ldots\coma\lambda(S_m)\,\big)\in \R_{\geq 0}^{d}
 \ ,
\eeq
where $d=\tr \, \dd = 
\suml_{j\in\I_m} d_j$ and each $\lambda(S_j)\in(\R_{\geq 0}^{d_j})\da$ is
the vector of eigenvalues of $S_j$, for $j\in\I_m$. Recall that given a generic sequence 
$\cM  \in \prod_{j \in \I_m} \, (\R_{\geq 0}^{d_j})\da$, we say that the pair 
$(\alpha\coma \cM)$ is  admissible  if 
there exists 
$\Phi\in \mathcal D(\alpha\coma \dd)$ such that $\cM = \cM_\Phi\,$, which in turns is equivalent to $\sigma_\cM\prec \alpha$. We also remark that $\Lambda_\Phi$ is not an ordered vector. We shall use the specific 
order of its entries given in Eq.\eqref{el Lambda} in order to preserve the convexity properties
given by Lemma \ref{rem: primera equiv admis y part mayo}. \EOE
\end{fed}

\begin{rem}\label{rem equiv mayo25} Consider the notation in Definition \ref{rem equiv mayo}.  
If $\varphi\in\convf$ and $\pot$ denotes the joint convex potential induced by $\varphi$ (see Definition \ref{pot generales}) then, 
\beq\label{eq igual pot} 
\pot(\Phi) =\sum_{j\in\I_m} \pot(\cF_j)= \sum_{j\in\I_m} \tr(\varphi(\la(S_j)))=\sum_{\ell\in\I_{|d|}}\varphi((\Lambda_\Phi)_\ell)=:\tr(\varphi(\Lambda_\Phi))\,. 
\eeq
Therefore, by Theorem \ref{teo intro prelims mayo} and Eq. \eqref{eq igual pot}, the existence of an (optimal) 
$(\alpha\coma \dd)$-design satisfying Eq. \eqref{eq desi estruc opt}
for every $\varphi\in\convf$ is equivalent
to the existence of $\Psi=(\cG_j)_{j\in\I_m}\in \mathcal D(\alpha\coma \dd)$
such that 
\beq 
\Lambda_{\Psi}\prec \Lambda_\Phi \peso{for every} \Phi=(\cF_j)_{j\in\I_m}\in \mathcal D(\alpha\coma \dd)\, .  \EOEP
\eeq
\end{rem}

\begin{rem}\label{rem sobre la forma y el fondo}
Consider the notation in Definition \ref{rem equiv mayo}. 
In the rest of this section we shall show 
the existence of $(\alpha\coma \dd)$-designs $\Phi^{\rm op}=(\cF_j^{\rm op})_{j\in\I_m}$ that are optimal with respect to 
every joint convex potential (see Theorem \ref{teo el algo sirve con potenciales}). It turns out that these optimal designs have some special features. 

\pausa
In this remark we describe the special structure of the associated sequence $\cM_{\Phi^{\rm op}}$ 
(and introduce the necessary notation to describe this structure)  
in order to make more intelligible the next statements, which are intended to construct admissible pairs with this (optimal) structure. 
Let
$$
\mu_j^{\rm op}=(\mu_{ij}^{\rm op})_{i\in\I_{d_j}}=\la(S_{\cF_j^{\rm op}})\in (\R_{\geq 0}^{d_j})\da 
\peso{for every}  j\in \I_m \ .
$$ 
denote the eigenvalues of the frame operators of $\cF_j^{\rm op}$. 
Then they must have  the following structure: Each vector  
$\mu_j^{\rm op}\in (\R_{\geq 0}^{d_j})\da $ is a (truncated) copy of the first vector 
$\mu_1^{\rm op}\in (\R_{\geq 0}^{d_1})\da $, i.e. 
\beq\label{eq prop espec opti}
 \mu_{ij}^{\rm op} =\mu_{i1}^{\rm op} \peso{for every} 
 i\in \I_j \peso{and every} j\in \I_m \ .
\eeq
In detail,  
let $\sigma(S_{\cF_1^{\rm op}})=\{\gamma_1,\ldots,\gamma_p\}$, with 
$\gamma_1>\ldots>\gamma_p\geq 0$. 
Then there exist 
indexes $g_0=0<g_1<\ldots<g_p=d_1$ (that we shall construct looking for admissibility) such that 
$$ \{ i\in\I_{d_1}: \mu_{i1}=\gamma_\ell\}=\{ i:\ g_{\ell-1}+1 \leq i\leq g_\ell\} \peso{for}\ell\in\I_p\ .$$
We define the following constants, which only depend on the data $\dd=(d_j)_{j\in\I_m}\in (\N^m)\da$: 
\beq\label{eq defi hi}
h_i:=\#\{j\in\I_m:\ d_j\geq i\}\peso{for} i\in \I_{d_1}\,.\eeq
Notice that, 
\beq\label{eq defi h}
h:=(h_i)_{i\in \I_{d_1}}=
\sum_{i=1}^m \ \uno_{d_i}\oplus 0_{d_1-d_i}\,.
\eeq
Using the relations in Eq. \eqref{eq prop espec opti}  we get that
\beq\label{eq defi r}
 \Lambda_{\Phi^{\rm op}}\da=(\gamma_\ell\, \uno_{r_\ell})_{\ell\in\I_p} \peso{where} r_\ell=
\sum_{i=g_{\ell-1}+1}^{g_\ell} h_i \ , \ \ \ell \in\I_p\,.
\eeq
We give an example of this situation  for $m=4$ and $\dd=(6,5,4,2)$ in Figure \ref{grafico del gama op}. 
\begin{figure}[H]
\begin{center}
\scalebox{0.50} {\includegraphics{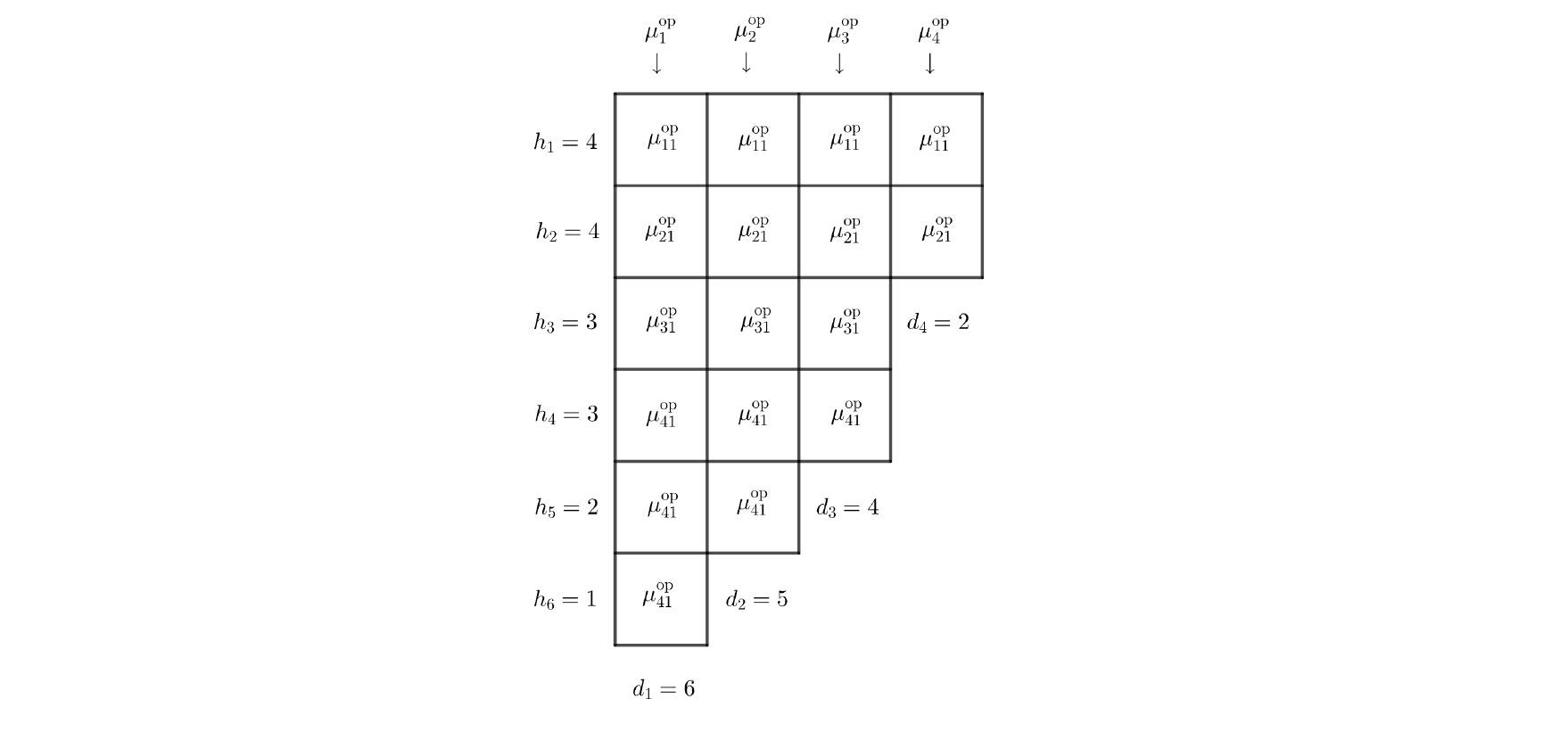}}
\caption{  A graphic example of the structure of $(\mu^{\rm op}_j)_{j\in\I_4}\,$. 
}
\label{grafico del gama op}
\end{center}
\end{figure}

\noi
For example, 
if we assume that 
$$\mu_{11}^{\rm op}=\mu_{21}^{\rm op}=\mu_{31}^{\rm op}=\gamma_1 \ \coma \ \mu_{41}^{\rm op}=\mu_{51}^{\rm op}=\gamma_2 \py  
\mu_{61}^{\rm op}=\gamma_3
\peso{with} \gamma_1>\gamma_2>\gamma_3$$
then we have: $g_0=0$, $g_1=3$, $g_2=5$ and hence, $r_1=11$, $r_2=5$, $r_3=1$; therefore, we compute 
$\Lambda_{\Phi^{\rm op}}\da=(\gamma_1\,\uno_{11}\coma \gamma_2\, \uno_{5}\coma \gamma_3\,\uno_1)\in\R_{>0}^{17}$ in this case.
\EOE
\end{rem}

\pausa
In order to obtain the our next main result, we consider the following

\begin{nota}
Let $\dd=(d_j)_{j\in\I_m}\in (\N^m)\da$ and $\alpha=(\alpha_i)_{i\in\I_n}\in (\R_{\geq 0}^n)\da$ be such that $d_1\leq n$.
Let $h=(h_i)_{i\in \I_{d_1}}$ be defined as in Eq. \eqref{eq defi hi}. 
For $1\leq s\leq t\leq d_1$, denote by $P_{s,t}$ and $Q_{t}$ the ratios
\begin{equation}\label{promedios iniciales}
 P_{s\coma t}=\frac{\suml_{i=s}^t \alpha_i}{\suml_{i=s}^t h_i} \py Q_{t}=\frac{\suml_{i=t}^n \alpha_i}{\suml_{i=t}^{d_1} h_i}\,.
\end{equation}
\end{nota} 

\pausa
The following result is a technical construction that we will use to build
optimal $(\alpha\coma \dd)$-designs with a spectral picture as in Remark \ref{rem sobre la forma y el fondo}.

\begin{teo}\label{defi del pegoteo de espectros}
Let $\dd=(d_j)_{j\in\I_m}\in (\N^m)\da$ and $\alpha=(\alpha_i)_{i\in\I_n}\in (\R_{\geq 0}^n)\da$ be such that $d_1\leq n$.
Consider
the vector $h=(h_i)_{i\in \I_{d_1}}$ as in Eq. \eqref{eq defi h}.
Then, there exist 
$$
p\in\I_{d_1}  \peso{and} g_1,\ldots,g_p\in\N  \peso{with} 
0=g_0< g_1<\cdots<g_p=d_1
$$ 
such that, if we define
$\gamma_i=P_{g_{i-1}+1\coma g_i}$, for $i\in \I_{p-1}$ and $\gamma_p=Q_{g_{p-1}+1}$ according with Eq. \eqref{promedios iniciales}, 
then 
\begin{enumerate}
\item \label{orden de los gamas}$\gamma_1>\ldots>\gamma_p> 0$;
\item
They satisfy the following ``block" majorizations:
\beq\label{mayo en los bloques} 
\barr{rl}
(\gamma_i\, h_k)_{k=g_{i-1}+1}^{g_i} & \succ (\alpha_i)_{k=g_{i-1}+1}^{g_i} 
\peso{for} i\in \I_{p-1} \py \\ &\\
(\gamma_p\, h_k)_{k=g_{p-1}+1}^{d_1} &\succ (\alpha_i)_{k=g_{p-1}+1}^n  \ . \earr
\eeq
In particular, 
\beq\label{admite}
(\gamma_1\, \uno_{g_1-g_0}\coma \gamma_2\, \uno_{g_2-g_1}\coma \ldots\coma 
 \gamma_p\, \uno_{g_p-g_{p-1}})\circ h\succ \alpha  \ , 
\eeq
where $\circ$ denotes the entry-wise product.
\end{enumerate}
\end{teo}
\begin{proof}
First note that  $d_1$ clearly satisfies $Q_{d_1}\geq P_{d_1\coma  d_1}$.
Then we can define the index 
\begin{equation}\label{irregularidad}
s^*=\min\{j\in \I_{d_1} : Q_j\geq P_{j\coma k}  \peso{for every}
j\le k \le d_1 
\}
\end{equation}
We denote  $c=Q_{s^*}\,$. Therefore, by Eq. \eqref{promedios iniciales}, 
$$
c\, \sum_{i=s^*}^k\, h_i\geq \sum_{i=s^*}^k \, \alpha_i \peso{for every} s^*\leq k\leq d_1 \ .  
$$
In other words, $(\alpha_k)_{k=s^*}^n \prec (h_k\, c)_{k=s^*}^{d_1}\,$. 
If $s^*=1$ then we set $p=1$, $g_0=0$, $g_1=d_1$ and $\gamma_1=c>0$. Then 
items 1. and 2. of the statement are satisfied in this case.

\pausa
Otherwise, $s^*>1$ and we proceed to find the step $g_1$.
First, we define $\gamma_1$:
\[ \gamma_1=\max\{ P_{1,k}\, :\, 1\leq k\leq s^*-1\}\, , \]
and then we define $g_1$ as:
\[g_1=\max\{j\in \I_{s^*-1} \,: P_{1,j}=\gamma_1\}\,.\]
By construction, we obtain that
\[(\gamma_1h_i)_{i=1}^{g_1}\succ (\alpha_i)_{i=1}^{g_1}\,.\]
Now, if $g_1=s^*-1$ then we set $p=2$, $g_0=0$, $g_2=d_1$ and $\gamma_2=c>0$.

\pausa Otherwise, $g_1<s^*-1$ (and having the index $s^*$ fixed), we define $g_2$ in a similar way:
\[ \gamma_2=\max\{ P_{g_1+1,k}\, :\, g_1<k\leq s^*-1\}\]
and then, 
\[g_2=\max\{g_1+1\leq j\leq s^*-1 \,: P_{g_1+1,j}=\gamma_2\}\,.\]
Again, by construction we have that
\[(\gamma_2h_i)_{i=g_1+1}^{g_2}\succ (\alpha_i)_{i=g_1+1}^{g_2}\,.\]
We claim that $\gamma_1>\gamma_2$. Indeed, suppose that  $P_{g_1+1,g_2}=\gamma_2\geq \gamma_1=P_{1,g_1}$. Then,
\begin{align*}
P_{1,g_2}-P_{1,g_1}&=\frac{\sum_{i=1}^{g_1} \alpha_i + \sum_{i=g_1+1}^{g_2} \alpha_i}{ \sum_{i=1}^{g_1} h_i + \sum_{i=g_1+1}^{g_2} h_i}-\frac{\sum_{i=1}^{g_1} \alpha_i}{\sum_{i=1}^{g_1} h_i}=\\
&=\frac{\left(\sum_{i=1}^{g_1} h_i \right)\left(\sum_{i=g_1+1}^{g_2} \alpha_i\right)- \left(\sum_{i=g_1+1}^{g_2} h_i\right)\left(\sum_{i=1}^{g_1} \alpha_i\right)}{ \sum_{i=1}^{g_1} h_i\left(\sum_{i=1}^{g_1} h_i + \sum_{i=g_1+1}^{g_2} h_i\right)}\\
&=\frac{\left(\sum_{i=1}^{g_1} h_i\right)\left(\sum_{i=g_1+1}^{g_2} h_i\right)(\gamma_2-\gamma_1)}{ \sum_{i=1}^{g_1} h_i\left(\sum_{i=1}^{g_1} h_i + \sum_{i=g_1+1}^{g_2} h_i\right)}\geq 0\,.
\end{align*}
Hence, $P_{1,g_2}=P_{1,g_1}=\gamma_1$ which contradicts the definition of $g_1$, so the claim is proved.

\pausa
We can continue inductively with this process, that is, once we find $g_{k-1}<s^*-1$ 
we compute first $\gamma_k$ as the maximum among $P_{g_{k-1}+1, l}$, with $g_{k-1}+1\leq l\leq s^*-1$ and then define $g_k\leq s^*-1$ 
as the maximum index $g_{k-1}+1\leq l\leq s^*-1$ such that $P_{g_{k-1}+1, l}=\gamma_k$. As before, 
this construction guarantees the corresponding block majorization. 

\pausa
Notice that in the last step, corresponding to the $p-1$ iteration of the process, we  necessarily have $g_{p-1}=s^*-1$.
Define $\gamma_p=Q_{g_{p-1}+1}=c>0$ and $g_p=d_1$.

\pausa
By construction, and the previous remarks we have that 
$\gamma_1>\gamma_2>\ldots>\gamma_{p-1}$ and item \ref{mayo en los bloques} is satisfied. 
It remains to prove that $\gamma_{p-1}>\gamma_p\,$.

\pausa
Suppose, on the contrary, that $\gamma_p\geq \gamma_{p-1}$. Consider $\overline{c}=Q_{g_{p-2}+1}$. Clearly,  $\overline{c}$ is a convex combination of $\gamma_p$ and $\gamma_{p-1}$, say $\overline{c}=t\gamma_{p-1}+(1-t)\gamma_p\,$. In particular, $\gamma_{p-1}\leq \overline{c}\leq \gamma_p\,$. 
Therefore, we have that 
\begin{equation}\label{los promedios mas chicos que gp} 
P_{g_{p-2}+1,l}\leq\gamma_{p-1} \leq \overline{c}\coma \text{ for } g_{p-2}+1\leq l\leq g_{p-1}
\end{equation} 
\pausa
Let $g_{p-1}+1\leq l<d_1$, and denote by
$A=\sum_{i=g_{p-2}+1}^{g_{p-1}} h_i$, $B=\sum_{i=g_{p-1}+1}^l h_i$ and $C=\sum_{i=l+1}^{d_1} h_i$.
Notice that, with this notation, in the convex combination that generates $\overline{c}$, we have  $t=\frac{A}{A+B+C}\,$.
Then, since $\gamma_{p-1}\leq \gamma_{p}\,$,
\begin{align*}
\frac{A}{A+B}\, \gamma_{p-1}+\frac{B}{A+B}\,\gamma_p&\leq \frac{A}{A+B+C}\,\gamma_{p-1}+ \frac{B+C}{A+B+C}\,\gamma_p
\end{align*}
Hence, since by definition of $\gamma_p$ we have $P_{g_{p-1}+1\coma l}\leq \gamma_p\,$, we obtain
\begin{equation}
\frac{A}{A+B}\,\gamma_{p-1}+\frac{B}{A+B}P_{g_{p-1}+1\coma l}\leq \frac{A}{A+B+C}\,\gamma_{p-1}+ \frac{B+C}{A+B+C}\,\gamma_p
\end{equation}
since $\frac{A}{A+B}\,\gamma_{p-1}+\frac{B}{A+B}\, P_{g_{p-1}+1\coma l}=P_{g_{p-2}+1 \coma l}$ 
and $\frac{A}{A+B+C}\,\gamma_{p-1}+ \frac{B+C}{A+B+C}\,\gamma_p=\overline{c}$, we deduce
\begin{equation}\label{los promedios que ganan gp}
P_{g_{p-2}+1 \coma l}\leq \overline{c} \ \coma \peso{for} g_{p-1}+1\leq l\leq d_1
\end{equation}
Therefore, Eqs. \eqref{los promedios mas chicos que gp}, \eqref{los promedios que ganan gp} imply that, for $j=g_{p-2}+1<s^*=g_{p-1}+1$,
$$
P_{j\coma l}\leq Q_j \ \coma \peso{for} l=j,\ldots, d_1$$
which contradicts the construction of $s^*$.
So we can conclude that $\gamma_{p-1}>\gamma_p$ and the theorem is proved.
\end{proof}

\pausa Next, we introduce the following vectors associated to a pair $(\alpha\coma \dd)$.

\begin{fed}\label{fed los gamma ops}\rm
Let $\dd=(d_j)_{j\in\I_m}\in (\N^m)\da$ and $\alpha=(\alpha_i)_{i\in\I_n}\in (\R_{\geq 0}^n)\da$ be such that $d_1\leq n$.
Let $0=g_0<\cdots< g_p=d_1$ and $\gamma_1>\cdots>\gamma_p>0$ be as in Theorem \ref{defi del pegoteo de espectros}. Then, we set:
\ben
\item $\mu_1^{\rm op}= (\mu_{i\,1} ^{\rm op})_{i\in\I_{d_1}}=
(\gamma_k\uno_{g_k-g_{k-1}})_{k=1}^p\in (\R_{> 0}^{d_1})\da$;
\item For $j>1$  set $\mu_j^{\rm op}=  (\mu_{i\,j} ^{\rm op})_{i\in\I_{d_j}}\in (\R_{> 0}^{d_j})\da$ 
such that $\mu_{i\,j}^{\rm op}=\mu_{i\,1}^{\rm op}$ for $i\in \I_{d_j}\,$. 
\item Denote $\cM^{\rm op}=\{\mu_j^{\rm op}\}_{j\in \I_m}\in \prod_{j \in \I_m} \, (\R_{>0}^{d_j})\da$. 
\een
At this point, the vector $\mu_1^{\rm op}$ (and so  the sequence $\cM^{\rm op}$) depends on the choice of the indexes 
$0=g_0<\cdots< g_p=d_1$ from Theorem \ref{defi del pegoteo de espectros}. Nevertheless, we shall see 
now  that, when rearranged,  
$\cM^{\rm op}$ has minimality properties for majorization, so that they are univocally determined. 
\EOE
\end{fed}

\begin{rem}\label{rem del rem}
Let $\gamma_1\geq \ldots\geq \gamma_p\in \R$ and consider 
$\la=(\gamma_1\, \uno_{r_1},\ldots,\gamma_p\, \uno_{r_p})=(\la_i)_{i\in\I_r}\in (\R^r)\da$, 
where $r \igdef\sum_{i\in\I_p}r_i\,$. Set 
$s_k=\sum_{j\in\I_k} r_j\,$, for $k\in\I_{p}\,$. Given $\beta\in(\R^r)\da$ such that $\tr(\la)=\tr(\beta)$ 
then 
\beq\label{la 2.2}
\la\prec\beta \iff
\sum_{i\in\I_k} \gamma_i\, r_i\leq \sum_{j\in\I_{s_k}} \beta_j \ , \peso{for} k\in\I_{p-1}\ .
\eeq
Indeed, if the right conditions hold and there exists $0\leq k\leq p-1$ with $s_k < t <s_{k+1}$ 
($s_0 = 0$) and
such that $\sum\limits_{j \in \I_t} \la_j > \sum\limits_{j \in \I_t} \beta_j\,$, it is easy to see that 
$$
\sum_{j =s_k+1}^t \beta_j < \sum_{j =s_k+1}^{t} \la_j = (t-s_k) \, \gamma_{k+1}  \implies \beta_t < \gamma_{k+1}
\implies 
 \sum_{j\in\I_{s_{k+1}}} \beta_j  <\sum_{i\in\I_{k+1}} \gamma_i\, r_i \ ,
$$
which contradicts our assumption \eqref{la 2.2}. Therefore $\la \prec \beta$. 
\EOE
\end{rem}

\pausa We can now state our second main result.

\begin{teo}\label{teo el algo sirve}
Let $\dd=(d_j)_{j\in\I_m}\in (\N^m)\da$ and $\alpha=(\alpha_i)_{i\in\I_n}\in (\R_{\geq 0}^n)\da$ be such that $d_1\leq n$.
\ben
\item 
If we let $\cM^{\rm op}$ be as in Definition \ref{fed los gamma ops} then, 
$(\alpha\coma  \cM^{\rm op})$ is admissible. In particular, there 
exists $\Phi^{\rm op}=(\cF^{\rm op}_j)_{j\in\I_m}\in \mathcal D(\alpha\coma \dd)$ such that 
$\cM^{\rm op} = \cM_{\Phi^{\rm op}}\,$. 
\item  
If $\Phi=(\cF_j)_{j\in\I_m}\in \cD(\alpha\coma \dd)$, then 
$$ \Lambda_{\Phi^{\rm{op}}}\prec \Lambda_\Phi\, ,$$
where $\Lambda_{\Phi^{\rm{op}}}\coma \Lambda_\Phi \in \R_{\ge 0}^{d}$ are as in Definition \ref{rem equiv mayo}.
\een
\end{teo}
\begin{proof}
1. By Definition \ref{fed los gamma ops}, each vector $\mu_i^{\rm op}\oplus 0_{d_1-d_i} = 
\mu_1^{\rm op} \circ \big(\, \uno_{d_i}\oplus 0_{d_1-d_i} \, \big) $. Then 
$$
\sigma_{\cM^{\rm op}}=\sum_{i\in\I_m} (\mu_i^{\rm op}\oplus 0_{d_1-d_i})
= \mu_1^{\rm op} \circ \sum_{i=1}^m \  \big(\, \uno_{d_i}\oplus 0_{d_1-d_i} \, \big) 
\stackrel{\eqref{eq defi h}}{=} \mu_1^{\rm op}\circ h \ ,
$$
using the vector $h=(h_i)_{i\in \I_{d_1}}$ defined from $\dd$ as in Eqs. \eqref{eq defi hi} and \eqref{eq defi h}.
Therefore, by Eq. \eqref {admite} in Theorem \ref{defi del pegoteo de espectros}, $\sigma_{\cM^{\rm op}}\succ \alpha$, so the statement follows from Theorem \ref{teo: caract de la part mayo}.

\pausa
2. Let $\Phi=(\cF_j)_{j\in\I_m}\in\cD(\alpha\coma \dd)$ be such that $\cF_j=\{f_{ij}\}_{i\in\I_n}$, for $j\in\I_m$. 
Then 
\beq\label{ec teo1 1}
\sum_{j\in\I_m}\|f_{ij}\|^2=\alpha_i \peso{for} i\in\I_n\,.
\eeq
On the other hand, if we denote  $\la_{ij} = \la_i(S_{\cF_j})$ for $j\in\I_m$ and $i \in \I_{d_j} \,$,  we also have that 
$$ 
(\|f_{ij}\|^2)_{i\in\I_n}\prec \la(S_{\cF_j})= (\la_{ij})_{i\in\I_{d_j}}\peso{for} j\in\I_m\ .
$$
Hence, we conclude that
\beq\label{ec teo1 2}
\sum_{i\in\I_s} \|f_{ij}\|^2 \leq \sum_{i=1}^{\min\{ s\coma d_j\}} \la_{ij} \peso{for} s\in\I_n\py j\in\I_m\,.
\eeq
Let $\Phi^{\rm op}\in\cD(\alpha\coma \dd)$ be as in item 1. We also consider $p\in\N$,
$g_0=0<g_1<\ldots<g_p=d_1$ and $\gamma_1> \ldots> \gamma_p> 0$ 
 as in Theorem \ref{defi del pegoteo de espectros}. 
Let  $r_1,\ldots,r_p\in\N$ such that $\Lambda_{\Phi^{\rm op}}\da=(\gamma_\ell\, \uno_{r_\ell})_{\ell\in\I_p}$ 
as in Eq. \eqref{eq defi r}.
%
By Remark \ref{rem del rem}, 
in order to prove that 
$\Lambda_{\Phi^{\rm op}}
\prec\Lambda_\Phi$ it is sufficient to check that
\beq\label{ec teo1 3}
\sum_{\ell\in\I_q} r_\ell\, \gamma_\ell \leq 
\sum_{i\in \I_{s_q}} (\Lambda_{\Phi})_i\da 
 \ \ \ ,\peso{where} s_q=\sum_{\ell\in\I_q}r_\ell\ \ \ ,\peso{for every}
q\in\I_{p-1}\ ,
\eeq
because $\tr \,\Lambda_{\Phi^{\rm op}} = \tr \,\alpha = \tr\, \Lambda_\Phi\,$. 
Fix $q\in\I_{p-1}$ and consider the set
$$S_q=\{(i\coma j): \ 1\leq i\leq \min\{g_q\coma d_j\}\ , \ j\in\I_m\}\ . $$  
It is easy to see, using Eq. \eqref{eq defi h} 
(or looking at the rows and columns of the Figure 1), 
that 
\beq\label{cuantos}
\#S_q = \sum _{j\in\I_m} \min\{g_q\coma d_j\} 
\stackrel{\eqref{eq defi h}}{=} \sum_{i\in \I_{g_q}} h_i = 
%
\sum_{\ell\in\I_q} \left( \sum_{i=g_{\ell-1}+1}^{g_\ell}   h_i\right)  
\stackrel{\eqref{eq defi r}}{=} \sum_{\ell\in\I_q} r_\ell = s_q \ .
\eeq
Therefore we can show Eq. \eqref{ec teo1 3} as follows:   For every $q\in\I_{p-1}\,$, 
\begin{eqnarray*}
 \sum_{(i\coma j)\in S_q} \la_{ij}& =&\sum_{j\in \I_m} \sum_{i=1}^{\min\{g_q\coma d_j\}} \la_{ij}
\stackrel{\eqref{ec teo1 2}}{\ge} \sum_{j\in\I_m} \sum_{i\in\I_{g_q}} \|f_{ij}\|^2 
\stackrel{\eqref{ec teo1 1}}{=} \sum_{i\in\I_{g_q}} \alpha_i\\
&=&\sum_{\ell\in\I_q}\left(\sum_{i=g_{\ell-1}+1}^{g_\ell}\alpha_i\right)
\stackrel{\eqref{mayo en los bloques}}{=} 
\sum_{\ell\in\I_q}  \left( \sum_{i=g_{\ell-1}+1}^{g_\ell}  \gamma_\ell  \, h_i\right)
\stackrel{\eqref{eq defi r}}{=} \sum_{\ell\in\I_q}r_\ell\,\gamma_\ell \ . 
\end{eqnarray*}
Since $\suml_{i\in \I_{s_q}} (\Lambda_{\Phi})_i\da \stackrel{\eqref{cuantos}}{\ge}
\suml_{(i\coma j)\in S_q} \la_{ij} \,$, then 
Eq. \eqref{ec teo1 3} follows, and 
$\Lambda_{\Phi^{\rm op}}
\prec\Lambda_\Phi\,$.
\end{proof}

\pausa
Theorem \ref{teo el algo sirve} together with the argument in Remark \ref{rem equiv mayo25} allow us to obtain our 
third main result.

\begin{teo}\label{teo el algo sirve con potenciales}
Let $\dd=(d_j)_{j\in\I_m}\in (\N^m)\da$ and $\alpha=(\alpha_i)_{i\in\I_n}\in (\R_{\geq 0}^n)\da$ be such that $d_1\leq n$.
\ben
\item 
Let $\Phi^{\rm op}=(\cF_j^{\rm op})_{j\in\I_m}\in\cD(\alpha\coma \dd)$ be as in Theorem \ref{teo el algo sirve}. 
 If $\varphi\in\convf$ then we have that
\beq \label{ecuac cuac cuac}
\text{P}_\varphi (\Phi^{\rm op}) 
\leq
\pot(\Phi) \peso{for every} \Phi =(\cF_j)_{j\in\I_m}\in \cD(\alpha\coma \dd)\,.
\eeq
\item 
	Moreover, if $\Phi=(\cF_j)_{j\in\I_m}\in \cD(\alpha\coma \dd)$ is
such that there exists $\varphi\in\convfs$ for which
$\Phi $ is a global minimum for $\pot$ (i.e., if
equality holds in Eq. \eqref{ecuac cuac cuac}\,), then 
\beq\label{M unico}
\Lambda_{\Phi}=\Lambda_{\Phi^{\rm op}} \py 
\cM_\Phi = \cM^{\rm op} \ , \peso{so that}  
\la(S_{\cF_j})=\mu_j^{\rm op} \in (\R_{>0}^{d_j})\da \ , 
\eeq
and, in particular,  $\cF_j$ is a frame for $\C^{d_j}$ 
for every  $j\in\I_m  \,$. 
\een
\end{teo}
\proof
Let $\Phi=(\cF_j)_{j\in\I_m}\in \cD(\alpha\coma \dd)$, then by Theorem \ref{teo el algo sirve} 
we know that $\Lambda_{\Phi^{\rm op}}\prec \Lambda_{\Phi}\,$. Therefore, by 
Remark \ref{rem equiv mayo25} we get that for every $\varphi\in\convf$, 
$$
\pot (\Phi^{\rm op})=\sum_{j\in\I_m} \pot(\cF_j^{\rm op})=
\tr(\varphi(\Lambda_{\Phi^{\rm op}}))\leq \tr(\varphi(\Lambda_{\Phi}))=\sum_{j\in\I_m} \pot(\cF_j)=\pot (\Phi)\,.
$$
If $\varphi\in\convfs$ and $\Phi=(\cF_j)_{j\in\I_m}\in \cD(\alpha\coma \dd)$ is such that
equality holds in Eq. \eqref{ecuac cuac cuac}, let 
$$ 
\cC=\{ \Lambda_\Psi:\ \Psi=(\cG_j)_{j\in\I_m}\in \cD(\alpha\coma \dd)\}\ \inc \  \R_{\geq 0}^{d} \ .$$
By Lemma \ref{rem: primera equiv admis y part mayo} it follows that $\cC$ is a convex set. 
\pausa We finally introduce $F_\varphi:\cC\rightarrow \R_{\geq 0}$, $F_\varphi(\Lambda)
=\tr(\varphi(\Lambda))$ for $\Lambda\in\cC$.
Since $\varphi$ is strictly convex we immediately see that $F$ - which is defined 
on the convex set $\cC$ - is strictly convex as well.
Hence, there exists a {\it unique} $\Lambda_\varphi\in\cC$ such that 
$$ 
F(\Lambda_\varphi)=\min\{F(\Lambda):\ \Lambda\in\cC\}\  .
$$
Notice that by hypothesis, we have that $F(\Lambda_\Phi)=F(\Lambda_{\Phi^{\rm op}})=\min\{F(\Lambda):\ \Lambda\in\cC\}$ so then
\beq
(\lambda(S_{\cF_j}))_{j\in\I_m}=\Lambda_{\Phi}=\Lambda_\varphi=\Lambda_{\Phi^{\rm op}}=(\lambda(S_{\cF_j^{\rm op}}))_{j\in\I_m}\ .
\QEDP
\eeq

\begin{rem}\label{era unico}
 As a consequence of Eq.'s \eqref{ecuac cuac cuac} and \eqref{M unico}, the sequence $\cM^{\rm op}$ of Definition \ref{fed los gamma ops} 
and the indexes $(g_i)_{i\in \I_p}$ of Theorem \ref{defi del pegoteo de espectros} are univocally determined. 

\pausa
On the other hand, with the notation of Theorem \ref{teo el algo sirve con potenciales}, if we 
assume that $m=1$, then the previous 
theorem recovers the main results from \cite{CKFT,mr2010,MRS13,mrs2}. In this case, the optimal spectra $\mu^{\rm op}$ is obtained in terms 
of the water-filling construction. Hence, our results can be considered as a multivariated extension of the water-filling construction 
(see \cite{mrs2}).
\EOE
\end{rem}

\section{Final comments and examples}\label{sec ejems}

\subsection{On the weight partitions}

\pausa
By Theorem \ref{teo el algo sirve con potenciales} and Remark \ref{era unico}, 
the spectral structure of all $(\alpha\coma \dd)$-designs that minimize a
strictly convex potential 
on $\cD(\alpha\coma \dd)$ 
is unique. 
It is natural to wonder whether the $(\alpha\coma m)$-weight partitions corresponding to such 
minimizers also coincide. It turns out that this is not the case, as we shall see in the following example: 

\pausa
Let $\alpha=\uno_6\in (\R_{>0}^6)\da$, $m=2$ and let $\dd=(4,2)\in\N^2$. In this case $\cM^{\rm op}$ is given by 
$$ \mu_1^{\rm op}=\uno_4 \py \mu_2^{\rm op}=\uno_2\ ,
$$
since $\alpha =\uno_6\prec  (2\coma 2\coma 1\coma 1)=\sigma_{\cM^{\rm op}}$ (so that $\cM^{\rm op}$ is 
$(\alpha\coma \dd)$-admissible), and because
the associated vector  $\Lambda ^{\rm op}= (\mu_1^{\rm op}\coma \mu_2^{\rm op}) = \uno_6\,$ is 
minimal for majorization. Let 
%
$$
A_1 = \left(\,\frac{4}{6}\,\uno_6 \coma \frac{2}{6}\,\uno_6\,\right) \in P_{\alpha\coma 2} 
\inc \matrec{6\coma 2}  \py
A_2 = \big(\,c_1(A_2)\coma c_2(A_2)\,\big) \in P_{\alpha\coma 2} 
$$
where $c_1(A_2)=(\uno_4\coma 0_2)$ and $c_2(A_2)=(0_4\coma \uno_2)\,$.
It is easy to see that both matrices satisfy 
Eq. \eqref{col}  in Lemma \ref{rem: primera equiv admis y part mayo}.  
%
%
%
Thus, we can construct $\Phi^{1}=(\cF_1^{1}\coma\cF_2^{1})\in\cD(\alpha\coma \dd)$ 
with weight partition $A_1\,$, in such a way that 
$\cF_1^{1}$ is a Parseval frame for $\C^4$ and  $\cF_2^{1}$ is a Parseval frame for $\C^2$ (both of 6 vectors).
%
On the other hand, if we let $\{e^{(k)}_\ell\}_{\ell\in\I_k}$ 
denote the canonical basis of $\C^k$ for $k\in\N$ and let 
$\Phi^{2}=(\cF_1^{2}\coma\cF_2^{2})\in\cD(\alpha\coma \dd)$ with 
$$
\cF_1^{2}=\{e^{(4)}_1,\ldots,e^{(4)}_4,0,0\} \in (\C^4)^6
\py \cF_2^2=\{0,0,0,0,e^{(2)}_1,e^{(2)}_2\}\in (\C^2)^6 \ , 
$$
then $\Phi^{2}$ has weight partition $A_2\,$. 
Clearly $\cM_{\Phi_1} = \cM_{\Phi_2} = \cM^{\rm op}$. 
That is, $A_1$ and $A_2$ 
are both associated to optimal $(\alpha\coma \dd)$-designs. Thus, 
weight partitions inducing optimal $(\alpha\coma \dd)$-designs are not unique.
Note that the $(\alpha\coma \dd )$-designs $\Phi^{1}$ and $\Phi^2$ are qualitatively 
different. 
%
%
%
%
%

\begin{rem}\label{esteemm}
The fact that there are many $(\alpha\coma m)$-partition matrices $A \in P_{\alpha\coma m}$ that 
are associated to minimizers, as in the previous example, shows that the construction of  
optimal $(\alpha\coma \dd)$-designs can not be reduced to a convex optimization problem in 
the space $P_{\alpha\coma m}$ of $(\alpha\coma m)$-weight partitions.

\pausa
We remark that in a previous version of this paper we constructed an algorithm which produced a particular 
matrix $A^{\rm op} \in P_{\alpha\coma m}$ associated to a minimizer. That is, once $A^{\rm op}$ was constructed
we considered the so-called water-filling of the columns $c_j(A^{\rm op})$ 
in dimension $d_j$ (see \cite{mrs3}), which lead to the optimal spectra $\mu_j^{\rm op}$, 
for $j\in\I_m\,$. 

\pausa 
Our new strategy, based on Theorem \ref{teo: caract de la part mayo}, allow us to compute directly  the  optimal spectra
$\mu_j^{\rm op}$ for $j\in\I_m$ (and to show 
the existence of $(\alpha\coma \dd)$-designs with these spectra). Once the optimal spectra are 
computed then, using Remark \ref{rem: algotutti},
we can compute several associated weight partitions that in turn allow us to compute optimal $(\alpha\coma\dd)$-designs in an effective way (see Section \ref{sec exas} below).
This new approach has decreased considerably the length of 
the exposition of our results herein.\EOE
\end{rem}

%
%

\subsection{A compact description of the problem}\label{sec 42}
There is a reformulation of the problems of this paper in a more concise model. Let $\al$ 
and $\dd$ be as in Definition 
\ref{nueva def}. 
Set $d = \tr \, \dd$ and assume that $\cH= \C^{d} = \bigoplus_{j \in \I_m} \cH_j $ for some 
subspaces with $\dim \, \cH_j = d_j $, for $j \in \I_m\,$. Let us denote by $P_j : \cH \to \cH_j\inc \cH$ 
the corresponding projections. 

\pausa 
Notice that a sequence $\cG = \{g_i\}_{i\in\I_n} \in \cB_{\al\coma d}\inc \cH^n \iff$ the sequence $\Phi= (\cF_j)_{j\in\I_m} $ determined by 
$\cF_j = P_j (\cG) $ (i.e. $f_{ij} = P_j (g_i)\in \cH_j \cong \C^{d_j}$, $i \in \I_n$) for $j\in\I_m$, 
satisfies that $\Phi \in \mathcal D(\alpha\coma \dd)$. 

\pausa 
Consider the pinching map $\cC_\dd : \matrec{d}\to \matrec{d}$ given by 
$\cC_\dd(A) = \sum_{j \in \I_m} P_j \, A \, P_j\,$, for every $A \in 
\matrec{d}$. Then, for each $\varp \in \convf$  we can define a $\dd$-pinched potential 
$$
\potd (\cG) \igdef \tr \varp (\cC_\dd(S_\cG)\,) \peso{for every} \cG\in \cH^n \ ,
$$
which describes simultaneously the behavior of the projections of $\cG$ to each subspace $\cH_j\,$. 
Actually, with the previous notations, 
$$
\potd (\cG) = \sum_{j\in\I_m} \tr \varp (P_j \, S_\cG\, P_j) = 
\sum_{j\in\I_m} \pot(\cF_j)=\pot(\Phi) \ .
$$
Therefore the problem of finding optimal $(\alpha\coma \dd)$-designs (and studying their properties) translates to 
the study of sequences $\cG \in \cB_{\al\coma d}$ which minimize the $\dd$-pinched potentials $\potd\,$. 

\pausa We point out that for $\varphi\in\convf$ and $\cG\in\hil^n$
$$
\potd(\cG) \le \tr \, \varphi(S_\cG) \peso{but}
\potd(\cG) \neq \tr \, \varphi(S_\cG)  \peso{in general} 
$$ 
(see Definition \ref{pot generales}).
Therefore, previous results related with the structure of minimizers of convex potentials in 
$\cB_{\alpha\coma d}$ (e.g. \cite{mr2010}) do not apply 
to the $\dd$-pinched potential and we require a new approach to study this problem, as shown in Example \ref{exa: el primero}. In the paper we use the 
more complicated notation of sequences because it has proved to be more useful for all the 
computations detailed before. 

\subsection{Examples}\label{sec exas}
\pausa
Theorems \ref{teo: caract de la part mayo}, \ref{teo el algo sirve} and \ref{teo el algo sirve con potenciales}, 
combined with Remark \ref{rem: algotutti} allow us to describe a finite step algorithmic process 
for the effective construction of optimal designs $\Phi^{\rm op}\in\cD(\alpha\coma \dd)$ from the initial data $(\alpha, \dd)$.

\pausa
A possible scheme for the algorithmic procedure would be as follows:

\begin{algo}\label{algo1}
IMPUT DATA: $\left(\alpha\coma \dd\right)$.
\begin{description}
\item[STEP 1.] Along with the computation of the vector $h$, by means of an iterative process, we compute the value of $s^*$ defined 
in Theorem \ref{defi del pegoteo de espectros}. If $s^*=1$, we set $p=1$, $g_1=1$ and $\gamma_1=Q_1$. In this case, $\gamma_1\uno_{d_1}\circ h\succ \alpha$ and go to STEP 3. Otherwise, set $j=1$ and continue with
\item[STEP 2.] By computing the maximum among the means $P_{j,k}$, $j\leq k\leq s^*-1$ we compute $g_j$ and $\gamma_j$ as it is indicated in 
Theorem \ref{defi del pegoteo de espectros}. If $g_j=s^*-1$, rename $j=p-1$, and $\gamma_p=Q_{g_{p-1}+1}$ and go to STEP 3.
Otherwise, set $j=g_j+1$ and repeat STEP 2. This step produces the set of optimal spectra 
$\cM^{\rm op}=\{\mu_j^{\rm op}\}_{j\in \I_m}$, such that $(\alpha\coma \cM)$ is admissible.
\item[STEP 3.] Apply the finite step algorithm described in Remark \ref{rem: algotutti} and obtain $\Phi^{\rm op}=(\cF^{\rm op}_j)_{j\in \I_m}$ 
(as described in Theorem \ref{teo el algo sirve}) as output.
\end{description}
\end{algo}

%
%

\pausa
The following examples were obtained via an implementation of Algorithm \ref{algo1} using MATLAB.
\begin{exa}\label{exa: el primero}
Consider the family of weights given by $\alpha=\{9, 8, 7, 5, 4, 2.5, 2, 2, 1.5, 0.6, 0.5\}$ and suppose that the dimensions to be considered are $\dd= (7\coma 5\coma 3)$. 
In this case, the optimal spectra $\cM^{\rm op}$ are determined, as in Definition \ref{fed los gamma ops},  by 
$$ \mu^{\rm op}_1=( 3,   2.7583,    2.7583,    2.7583,    2.7583,    2.7583,    2.7583) \ .
$$ 
%
If $\sigma_{\cM^{\rm op}}\in\R^7$ is defined as in Theorem \ref{teo: caract de la part mayo},  
then, $\alpha\prec \sigma_{\cM^{\rm op}}\,$ by Theorem \ref{teo el algo sirve}. 
Using Remark \ref{rem: algotutti} we construct $D\in\mathcal {DS}(11)$ such that 
$D(\sigma_{\cM^{\rm op}} \oplus 0_4) =\alpha$. Setting $A\in \cM_{11,3}(\R_{\geq 0})$, 
such that $c_j(A)=D\, (\mu_j^{\rm op}\oplus 0_{11-d_j})\in\R_{\geq 0}^{11}$, for $j=1,2,3$,	
we get (for example) the following partition of $\alpha$: 
\[A=\footnotesize{\left[\begin{array}{rrr}
    3         &3         &3\\
    2.7583    &2.7583    &2.4833\\
    2.7583    &2.7583    &1.4833\\
    2.7583    &1.8135    &0.4282\\
    2.5267    &1.1307    &0.3425\\
    1.5792    &0.7067    &0.2141\\
    1.2634    &0.5654    &0.1713\\
    1.2634    &0.5654    &0.1713\\
    0.9475    &0.4240    &0.1285\\
    0.3790    &0.1696    &0.0514\\
    0.3158    &0.1413    &0.0428
\end{array}
\right]}\in P_{\alpha\coma 3}\,.
\]
Once we have the partitions and optimal spectra, we can construct examples of frames using these data, applying known algorithms like one-sided Bendel-Mickey algorithm (see \cite{CFMP,CMTL,ID,FWW}):
\[\cF_1=\tiny{\left[\begin{array}{rrrrrrrrrrr}
      0.0705   & 0.1956   &-0.0616  & -0.6865   &-0.6865    &0.3994   &-0.0845   &-0.3230   &-1.1553  &0.2649   &-0.3180\\
    0.2804   &-0.2311   &-0.2142    &0.2434    &0.2434   &-0.4716    &1.2808    &0.2534   &-0.4197  &0.3309   &-0.5206\\
    0.0380   &-0.1106   &-0.5728   &-0.8134   &-0.8134   &-0.2257    &0.3005    &0.0342    &0.9482  &0.1009   &-0.2125\\
   -0.0004   &-0.1760   &-0.3643   &-0.2125   &-0.2125   &-0.3592    &0.2804    &0.2989   &-0.4753 &-1.0345    &0.9956\\
   -0.4655   &-0.4260   &-0.5815    &0.0796    &0.0796   &-0.8695   &-0.8294    &0.3134   &-0.3310  &0.0127   &-0.6235\\
    0.1120    &0.2501    &0.3128   &-0.1034   &-0.1034    &0.5106   &-0.0368    &1.2019    &0.0419 &-0.6107   &-0.7246\\
    0.0391   &-0.0112    &0.0316    &0.0949    &0.0949   &-0.0229    &0.1448   &-0.9781    &0.1061 &-1.0607   &-0.8232
\end{array}
\right]}
\]
\[\cF_2=\tiny{\left[\begin{array}{rrrrrrrrrrr}
 0.1841    &0.2017    &0.3189    &0.0682   &-0.2093   &-0.2340   &-0.2960   &0.6595    &0.5432    &0.0340    &1.3437\\
    0.0249    &0.0273    &0.0432    &0.6049    &0.6893    &0.7707    &0.9748    &0.0893    &0.4598    &0.3451    &0.1842\\
   -0.1947   &-0.2132   &-0.3372   &-0.3744   &-0.1430   &-0.1599   &-0.2022   &-0.6973    &1.2517    &0.5169   &-0.1238\\
   -0.2625   &-0.2876   &-0.4547   &-0.2253    &0.1440    &0.1610    &0.2037   &-0.9404   &-0.4997   &-0.3351    &1.0506\\
   -0.0015   &-0.0016   &-0.0025   &-0.0619   &-0.0723   &-0.0808   &-0.1022   &-0.0053   &-0.6598    &1.5029    &0.2041
\end{array}
\right]}
\]

\[\cF_3=\tiny{\left[\begin{array}{rrrrrrrrrrr}
-0.0342  & -0.0375  & -0.0593  & -0.3714   &-0.3952   &-0.4419   &-0.5590   &-0.6249   &-1.1632   &-0.2605   &-0.3888\\
   -0.1953   &-0.2139   &-0.3383   &-0.1805    &0.0479    &0.0536    &0.0678    &0.0758    &0.1410   &-1.4873    &0.5519\\
    0.0592    &0.0649    &0.1026   &-0.0281   &-0.1129   &-0.1263   &-0.1597   &-0.1786   &-0.3324    &0.4511    &1.5950
\end{array}
\right]}
\]
Let $\Phi^{\rm op} = (\cF_1\coma \cF_2\coma \cF_3)\in\cD(\alpha\coma \dd)$. Then, by Theorem \ref{teo el algo sirve con potenciales} we have that
$$
\min\{\pot(\Phi)\ : \ \  \Phi \in \cD(\alpha\coma \dd)\}=\pot(\Phi^{\rm op}) = 3\ \varphi(3) + 
12\ \varphi(2.7583)\,.
$$
Now, with the notation and terminology of Section \ref{sec 42}, we get the following lower bound for the $\dd$-pinched potential (notice that $d=d_1+d_2+d_3=15$) 
\beq \label{eq min pot con restric12} 
\min \{\tr\, \varp (S_ \cG)\ :\  \ \cG\in  \cB_{\al\coma 15}\} \ge 
\min \{\potd(\cG)\ :\  \ \cG\in  \cB_{\al\coma 15}\}= 3\ \varphi(3) + 
12\ \varphi(2.7583)\,.
\eeq
%
%
Indeed, since $d=15> n=11$, 
we have that (see \cite{mr2010})
\beq \label{eq min pot sin restric12}
\min \{\tr\, \varp (S_ \cG)\ :\  \ \cG\in  \cB_{\al\coma 15}\}=\sum_{j\in\I_{11}} \varphi(\alpha_j)+ 4\, \varphi(0)\,.
\eeq Moreover, in case $\varphi\in\convfs$ then the minimizers of the potential 
$\cG\mapsto \tr\, \varp (S_ \cG)$ in $\cB_{\al\coma 15}$ 
are sequences $\cG=\{ g_i\}_{i\in\I_{11}}\in\cH^{11}$ of mutually orthogonal vectors.
If we further choose $\varphi(x)=x^2$, $x\geq 0$, then the reader can check that nor the minimal value, 
nor the geometric structure of minimizers of Eqs. \eqref{eq min pot con restric12} and \eqref{eq min pot sin restric12} coincide.
\EOE
\end{exa} 


\begin{exa}
When $\alpha=\{20, 19.5, 10, 5, 4.5, 3, 2.4, 2\}$ and $\dd=\{ 5,     4,     4,     3,     2\}$, Algorithm  \ref{algo1} 
constructs the optimal spectra given by
\begin{align*}
 \mu^{\rm op}_1&=( 4,    3.9,   3.3625,    3.3625,    3.3625)\\
 \mu^{\rm op}_2&=(4,    3.9,   3.3625,    3.3625)\\
 \mu^{\rm op}_3&=(4,    3.9,   3.3625,    3.3625)\\
 \mu^{\rm op}_4&=(4,    3.9,   3.3625)\\
 \mu^{\rm op}_5&=(4,    3.9)\\
\end{align*}
where the smaller spectrum does not have the constant $3.3625$.
As before, using Remark \ref{rem: algotutti}, we obtain the following partition:
\[A=\footnotesize{\left[\begin{array}{rrrrr}
    4         &4         &4         &4         &4  \\
    3.9       &3.9       &3.9       &3.9       &3.9\\
    3.3625    &2.8875    &2.5       &1.25      &     0\\
    1.9896    &1.1354    &1.25      &0.625     &     0\\
    1.7907    &1.0218    &1.125     &0.5625    &     0\\
    1.1938    &0.6812    &0.75      &0.375     &     0\\
    0.955     &0.545     &0.6       &0.3       &     0\\
    0.7959    &0.4541    &0.5       &0.25      &     0
\end{array}
\right]}\in P_{\alpha \coma 5}\,.
\]\EOE
\end{exa}
\pausa
{\bf Acknowledgements:} We would like to thank the reviewers for several comments and suggestions that helped us to improve the contents of this manuscript.

{\scriptsize
}


\begin{thebibliography}{99}

\bibitem{Illi} J. Antezana, P. Massey, M. Ruiz and D. Stojanoff,
The Schur-Horn theorem for operators and frames with prescribed norms and frame operator,
 Illinois J.  Math., { 51} (2007),  537-560.


\bibitem{BMS1} M.J. Benac, P. Massey, D. Stojanoff, 
Convex potentials and optimal shift generated oblique duals in shift invariant spaces. 
J. Fourier Anal. Appl. 23 (2017), no. 2, 401-441. 

\bibitem{BMS2} M.J. Benac, P. Massey, D. Stojanoff, 
 Frames of translates with prescribed fine structure in shift invariant spaces. 
J. Funct. Anal. 271 (2016), no. 9, 2631-2671. 


\bibitem{BF} J.J. Benedetto, M. Fickus, Finite normalized tight frames. Frames. Adv. Comput. Math. 18 (2003), no. 2-4, 357-385.

\bibitem{Bhat} R. Bhatia,  Matrix Analysis,
Berlin-Heildelberg-New York, Springer 1997.


\bibitem{BodPau} B.G. Bodmann, V.I. Paulsen, Frames, graphs and erasures. Linear Algebra Appl. 404 (2005), 118-146.

\bibitem{TaF} P.G. Casazza, The art of frame theory, Taiwanese J. Math. 4 (2000), no. 2, 129-201.

\bibitem{Casagregado}P.G. Casazza, Custom building finite frames. In Wavelets, frames
and operator theory, volume 345 of Contemp. Math., Amer. Math. Soc., Providence, RI, 2004, 61-86.

\bibitem{CKFT} P.G. Casazza, M. Fickus, J. Kovacevic, M.T. Leon, J.C. Tremain, A physical interpretation of tight frames. Harmonic analysis and applications, 51--76, Appl. Numer. Harmon. Anal., Birkhäuser Boston, MA, 2006.

\bibitem{CFMP} J. Cahill, M. Fickus, D.G. Mixon, M.J. Poteet, N. Strawn, Constructing finite frames of a given spectrum and set of lengths, Appl. Comput. Harmon. Anal. 35 (2013), 52-73.


\bibitem{CMTL} P.G. Casazza, and M.T. Leon, Existence and construction of finite frames with a given frame operator. Int. J. Pure Appl. Math. 63 (2010), no. 2, 149-157.


\bibitem{FinFram} P. G. Casazza and G. Kutyniok eds., Finite Frames: Theory and Applications. Birkhauser, 2012. xii + 483 pp.

\bibitem{Chr} O. Christensen, An introduction to frames and Riesz bases. Applied and Numerical Harmonic Analysis. Birkhäuser Boston, 2003. xxii+440 pp.


\bibitem{ID} I.S. Dhillon, R.W. Heath Jr., M.A. Sustik, J.A. Tropp, Generalized finite algorithms for constructing Hermitian matrices with prescribed diagonal and spectrum, SIAM J. Matrix Anal. Appl. 27 (1) (2005) 61-71.

\bibitem{DFKLOW} K. Dykema, D. Freeman, K. Kornelson, D. Larson, M. Ordower, E. Weber, Ellipsoidal tight frames and projection decomposition of operators: Illinois J. Math. 48 (2004), 477-489.

\bibitem{FWW} D. J. Feng, L. Wang and Y. Wang, Generation of finite tight frames by Householder transformations. Adv Comput Math 24 (2006), 297-309.


\bibitem{FMaP}
M. Fickus, J. Marks, M. Poteet, A generalized Schur-Horn theorem
and optimal frame completions. Appl. Comput. Harmon. Anal. 40
(2016), no. 3, 505-528.

\bibitem{FMP}
M. Fickus, D. G. Mixon and M. J. Poteet,  Frame completions for
optimally robust reconstruction, Proceedings of SPIE, 8138:
81380Q/1-8 (2011).

\bibitem{HS07} B. Hassibi, M. Sharif, Fundamental Limits in MIMO Broadcast Channels. IEEE Journal on Selected Areas in Communications
25(7) (2007),  1333-1344.  

\bibitem{HolPau} R.B. Holmes, V.I. Paulsen, Optimal frames for erasures. Linear Algebra Appl. 377 (2004), 31-51.


\bibitem{KLagregado}K. A. Kornelson, D. R. Larson, Rank-one decomposition of operators and construction of frames. Wavelets, frames and operator theory, Contemp. Math.,
345, Amer. Math. Soc., Providence, RI, 2004, 203-214.

\bibitem{MRiS1}P. Massey, N. Rios , D. Stojanoff, Frame completions with prescribed norms: local minimizers and
applications.Adv. Comput. Math., in press.


\bibitem{MR0}P. Massey, M.A. Ruiz, Tight frame completions with prescribed norms. Sampl. Theory Signal Image Process. 7 (2008), no. 1, 1-13.


\bibitem{mr2010}
P. Massey, M. Ruiz; Minimization of convex functionals over frame operators. Adv. Comput. Math. 32 (2010), no. 2, 131-153.


\bibitem{MRS13} P. Massey, M. Ruiz , D. Stojanoff,  Optimal dual frames and frame completions for majorization. Appl. Comput. Harmon. Anal. 34 (2013), no. 2, 201-223.


\bibitem {mrs2}
P.G. Massey, M.A. Ruiz, D. Stojanoff; Optimal frame completions.
Advances in Computational Mathematics 40 (2014), 1011-1042.

\bibitem {mrs3}
P. Massey, M. Ruiz, D. Stojanoff; Optimal frame completions with prescribed norms for majorization.
J. Fourier Anal. Appl. 20 (2014), no. 5, 1111-1140.





\end{thebibliography}
\end{document}